\documentclass[12pt, reqno]{amsart}
\usepackage{amsmath, amsthm, amscd, amssymb, graphicx, color, ulem,xcolor}
\usepackage{lmodern}
\usepackage{mathrsfs}
\usepackage{array}
\usepackage{enumerate}[(i).]
\usepackage{tabularx}
\usepackage{hyperref}
\newtheorem{theorem}{Theorem}[section]
\newtheorem*{theorem*}{Theorem}
\newtheorem{lemma}[theorem]{Lemma}
\newtheorem{proposition}[theorem]{Proposition}
\newtheorem{corollary}[theorem]{Corollary}
\theoremstyle{definition}
\newtheorem{definition}[theorem]{Definition}

\theoremstyle{remark}
\newtheorem{remark}[theorem]{Remark}
\numberwithin{equation}{section}
\newcommand{\li}{L^\infty}
\newcommand{\ck}{\mathcal{K}}

\begin{document}

	\title[The Skew Commutators of Toeplitz and Hankel operators]{The Skew Commutators of Toeplitz and Hankel operators in vector-valued Hardy Space}
	
	\author[Priyanka Aroda and Santanu Dey]{Priyanka Aroda$^*$ and Santanu Dey$^{**}$}
	\maketitle
	
	\begin{abstract}
            In this article, we characterize when a Toeplitz operator and a Hankel operator on the vector-valued Hardy space are skew commutators of each other, and determine necessary and sufficient conditions under which their product is self-adjoint. These characterizations extend the results in \cite{LZD}. In addition, we completely classify the skew commutators of the unilateral shift $S$, its adjoint $S^*$ and $S\oplus S^*.$ We also characterize the class of bounded linear operators that have $S$, $S^*$ and $S\oplus S^*$ as skew commutators.
    \end{abstract}
    
	\vspace{0.5cm}
	\paragraph{\textbf{Keywords}} Hardy space, Skew Commutator,  Block Toeplitz Operator, Block Hankel Operator. 
    
	\vspace{0.5cm}
	\paragraph{\textbf{Mathematics Subject Classification (2020)}}Primary: 47B35, 47B47

    \vspace{0.1in}


	\section{{Introduction and Preliminaries}}
    \noindent

    Toeplitz and Hankel operators have been among the most important classes of operators on several function spaces, and have been extensively studied. The rich algebraic and spectral structure has led to numerous applications in operator theory, complex analysis, mathematical physics, engineering, etc. We begin by introducing the notations that we are going to use during the discussion of the whole article.

     Let $\mathbb{D}$ denote the open unit disk $\{ z \in \mathbb{C}: |z| < 1 \}$ in the complex plane and let $H(\mathbb{D})$ denote the set of Holomorphic functions on $\mathbb{D}$. 
     The Hardy-Hilbert space, denoted by $H^2(\mathbb{D})$, 
     is defined as
    $$ H^2(\mathbb{D}):=\Bigl\{f\in H(\mathbb{D}): f(z)=\sum_{n=0}^{\infty}a_nz^n \text{ with } \sum_{n=0}^{\infty} |a_n|^2 <  \infty \Bigr\}, $$
    with inner product equipped with $\langle f,g\rangle=\sum a_n \overline b_n$, where $f(z)=\sum_{n=0}^{\infty}a_nz^n$  and  $g(z)=\sum_{n=0}^{\infty}b_nz^n$. 
    Let $H^\infty(\mathbb{D})$ denote the set of bounded analytic functions on $\mathbb{D}$. Furthermore, let $L^2$ and $L^\infty$ be the set of square integrable and essentially bounded functions on $\mathbb{T}:=\{ z \in \mathbb{C}: |z|=1 \}$ with respect to the normalized Lebesgue measure, respectively.

    By the virtue of Fatau's Theorem, one can identify $H^2(\mathbb{D})$ through the boundary limit as $H^2(\mathbb{T})$, which is a closed subspace of $L^2$ consisting of functions whose negative Fourier coefficients vanish. We use $H^2$ to denote both $H^2(\mathbb{D})$ and $H^2(\mathbb{T})$ according to the context, and $H^\infty = L^\infty \cap H^2$.
    
    In mid 1960s Brown-Halmos introduced the Toeplitz operator $T_\phi$ in the seminal paper \cite{BH}, and defined this as $$T_\phi:H^2\rightarrow H^2 \text{ by } T_{\phi}=PM_\phi|_{H^2}
    ,$$ where $\phi \in L^\infty$ and $P$ is the orthogonal projection of $L^2$ onto $H^2,$ also known as the Szeg\"o projection. The Hankel operator $H_\phi$ is defined as $$ H_\phi: H^2 \rightarrow H^2 \text{ by } H_\phi = P\mathcal{J}M_\phi|_{H^2}, $$ where $\mathcal{J}: L^2\rightarrow L^2$ is the unitary involution, known as the flip operator, which is defined by $\mathcal{J}f(z)=\bar{z}f(\bar{z})$.

    The most elementary Toeplitz operator is $T_z$, known as the forward shift operator, also denoted by $S$. This operator has been playing a significant role in function theory over the years. Brown-Halmos showed that a bounded operator $A$ acting on $H^2$ is a Toeplitz operator if and only if 
    $$ S^*AS=A,$$
    and in which case $ A=T_\phi, \text{ for some } \phi\in \li.$ Similarly, a bounded operator $A$, acting on $H^2$, is a Hankel operator if and only if it satisfies the relation 
    $$S^*A=AS.$$
    In that case $A=H_\phi,$ for some $\phi\in L^\infty$ due to Nehari's theorem \cite{ZN}.

    In an analogy to this classical setting, the Toeplitz and Hankel operators are defined on an arbitrary Hilbert space $\mathcal{H}$ by introducing a unilateral shift operator.

    \begin{definition}
        An isometry $S\in \mathcal{B}(\mathcal{H})$ is said to be a unilateral shift if there exists a subspace $\mathcal{L}$ of $\mathcal{H}$ such that $\mathcal{L} \perp S^n\mathcal{L}$ for $n\geq 1$, and $$\mathcal{H}=\bigoplus_{0}^{\infty}S^n\mathcal{L}.$$
       An operator $T\in \mathcal{B}(\mathcal{H})$ is said to be a Toeplitz operator if $S^*TS=T,$ and a Hankel operator if $S^*T=TS.$
    \end{definition}
    We observe that, unlike the classical case, these abstract definitions are formulated independently of any symbol. The subspace $\mathcal{L}$ in the above definition is called a \emph{wandering} subspace. For a unilateral shift $S\in \mathcal{B}(\mathcal{H})$, the wandering subspace is uniquely given by $\mathcal{H}\ominus S\mathcal{H}.$ 
    Let $D_T$ be the defect operator, defined as $D_{T} := \sqrt{I-T^*T},$ for $T\in \mathcal{B}(H)$. Then one can even identify the wandering subspace $\mathcal{L}$ as $D^2_{S^*}\mathcal{H},$ because for $h\in \mathcal{H}$, with $h=\sum_{i=0}^{\infty}S^il_i, ~l_i \in \mathcal{L}$, we have 
    $$ D^2_{S^*}(\sum_{i=0}^{\infty}S^il_i)=  (I-SS^*)(\sum_{i=0}^{\infty}S^il_i)=l_0. $$
    Hence, $D^2_{S^*}$ is nothing but the orthogonal projection of $\mathcal{H}$ onto the wandering subspace $\mathcal{L}.$

    Let $H(\mathbb{D}, \mathbb{C}^n)$ denote the set of Holomorphic function taking values in $\mathbb{C}^n.$ We now define a vector ($\mathbb{C}^n$) valued Hardy space 
    \begin{equation*}
        H^2_n:=\{f\in H(\mathbb{D}, \mathbb{C}^n): f(z)=\sum_{n=0}^{\infty}z^n a_n\text{ with } \sum_{n=0}^{\infty} ||a_n||^2 <  \infty \},
    \end{equation*}
    where the inner product is given by $\langle f,g\rangle=\sum a_n \overline b_n$, where $f(z)=\sum_{n=0}^{\infty}z^na_n$  and  $g(z)=\sum_{n=0}^{\infty}z^nb_n.$ $L^\infty(M_n(\mathbb{C}))$ be the algebra of essentially bounded functions on the unit circle taking values in $M_n(\mathbb{C}).$ Let $M_n(L^\infty)$ denote the collection of square matrices of order $n$ whose entries are coming from $L^\infty.$ Similarly, we define $H^\infty(M_n(\mathbb{C}))$ as the set of essentially bounded analytic functions on the unit circle taking values in $M_n(\mathbb{C})$ and $M_n(H^\infty)$ as collection of square matrices of order $n$ with entries in $H^\infty.$ We can identify
    $H^2_n, L^\infty(M_n(\mathbb{C}))$ and 
    $H^\infty(M_n(\mathbb{C}))$ as $\bigoplus_{1}^{n}H^2, M_n(L^\infty)$ and $M_n(H^\infty),$ respectively. The Toeplitz and Hankel operators are defined on $ H^2_n$ as follows:
    \begin{definition}
         For $F \in M_n(L^\infty)$, the Toeplitz operator $T_F : H^2_n\to H^2_n$ is defined as $$ T_F(h)(z)=P\big(F(z)h(z)\big), $$ 
         and the Hankel operator $H_F:H^2_n\to H^2_n$ is defined as $$  H_F(h)(z)=P\big(\overline{z}F(\overline{z})h(\overline{z})\big).$$
    \end{definition}
    
    The operator $H_F$ does not depend on the analytic part of $F$. i.e. $H_F=H_G$ if and only if $F-G \in M_n(H^\infty).$
    So whenever we consider the Hankel operator $H_F$, we will assume $F$ satisfies $(I-P)F = F$. For any $F \in M_n(L^\infty)$, 
    let $F^*$ denote the adjoint of $F$ and 
    $\widetilde F$ is defined as $F^*(\overline{z})$.
    It is known that, $T_{F_1}^*=T_{F_1^*}$ and $H_{F_2}^*=H_{\widetilde F_2}.$ As similar to the classical case, for $G \in M_n(L^\infty)$, we have
    $T_{F_1}T_G = T_{F_1 G}$.

   In 2000, Mart\'inez-Avenda\~no \cite{RAMA} characterized when a Toeplitz operator and a Hankel operator commute in the classical Hardy space $H^2$. In other words, they established the necessary and sufficient condition for the commutator $[T_\phi, H_\psi]$ to vanish, that is,
   $$[T_\phi, H_\psi]= T_\phi H_\psi - H_\psi T_\phi = 0 .$$ 
   This work subsequently inspired several related investigations, including those of Guo-Zheng \cite{GZ}, Gu \cite{CG}, Ding \cite{D}, Yan \cite{Y}, Lu-Kong \cite{LK}, Lee \cite{L}, and others.

   In particular, in 2003, Gu \cite{CG} extended Mart\'inez-Avenda\~no's characterization to the vector-valued Hardy space by establishing many fruitful results in the more general setting of abstract Hilbert spaces. 
   Recently, Li-Zheng-Ding \cite{LZD} introduced the notion of skew-commutator for bounded linear operators $A, B \in B(\mathcal{H})$ defined as
   $$_*[A, B]= AB- BA^*.$$
    \begin{definition}[\cite{LZD}]
        A bounded linear operator $B$ is said to the skew-commutator of $A$ if $ _*[A, B]=0.$
    \end{definition}
    They studied when a Toeplitz operator or a Hankel operator on the classical Hardy space $H^2$ are skew commutators of each other, and when their product is self-adjoint.

   The main objective of this article is to study the skew-commutator of the Toeplitz and Hankel operators in the vector-valued setting. It is worth noting that the operator-valued symbols give rise to noncommutativity, and consequently, many techniques employed in the scalar-valued setting cannot be carried over directly.
   
   In Section \ref{section skew commutators}, we establish the necessary and sufficient conditions for a Hankel operator to be the skew-commutator of a Toeplitz operator in the vector-valued Hardy space and vice versa. 
   For example, we have the following theorem.
   \begin{theorem*}[Theorem \ref{Th2}]
        Assume that $P(\overline z \phi(z))$ is regular. Then $T_\phi H_\psi =H_\psi T_\phi^*$  if and only if there exist $A\in M_n(\mathbb{C})$ such that the following holds:
        \begin{enumerate}[(i)]
            \item $\psi(z)-\phi ( \overline z)A$ is analytic,
            \item $\psi(z)+A\phi^*(z)$ is analytic, and
            \item $\phi(\overline z)A\phi(z)^*$ is analytic.
       \end{enumerate}
    \end{theorem*}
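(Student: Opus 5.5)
The plan is to run the classical Martínez-Avendaño/Gu shift argument on the operator
$$X:=T_\phi H_\psi-H_\psi T_\phi^{*}=T_\phi H_\psi-H_\psi T_{\phi^*}.$$
The first step is to compute $XS-S^*X$ and show that it is of finite rank (at most $n$). For the Hankel factor we use $H_\psi S=S^*H_\psi$. For the Toeplitz factor we use the two commutator identities
$$S^*T_\phi-T_\phi S^*=T_{\overline z\phi}\,(I-SS^*),\qquad T_{\phi^*}S-ST_{\phi^*}=(I-SS^*)\,T_{\phi^*}S .$$
Both follow from $T_{\overline z}T_\phi=T_{\overline z\phi}$ and $T_{\overline z\phi}S=T_\phi$. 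Here $I-SS^*=D^2_{S^*}=E_0$ is the projection onto the constants $\mathbb C^n$, as observed in the preliminaries. Substituting, we get
$$XS-S^*X=-\,T_{\overline z\phi}E_0H_\psi+H_\psi E_0T_{\phi^*}S .$$

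For necessity, assume $X=0$. Then
$$P(\overline z\phi)\,(H_\psi h)(0)=H_\psi\big((T_{\phi^*}zh)(0)\big)\quad\text{for all } h\in H^2_n,$$
where the left side is the matrix function $P(\overline z\phi)$ multiplied by the constant vector $(H_\psi h)(0)$. The right side is $H_\psi$ applied to a constant vector. The right side lies in the span of the $n$ columns of $H_\psi$ on constants, i.e.\ of $P(\overline z\psi(\overline z))$. Since $P(\overline z\phi(z))$ is regular, the left-hand map can be inverted. This should produce a constant matrix $A$ that relates the coanalytic part of $\psi$ to $\phi(\overline z)$, which is condition (i): $\psi-\phi(\overline z)A$ is analytic. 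The main obstacle is this step, namely extracting $A$ cleanly in the noncommutative setting. I would compare Fourier coefficients of both sides, using $(H_\psi h)(0)=\sum_k \hat\psi(-k-1)\hat h(k)$ and $(T_{\phi^*}zh)(0)=\sum_k \widehat{\phi^*}(-k-1)\hat h(k)$. Matching these coefficientwise, and cancelling $P(\overline z\phi)$ by regularity, identifies the Fourier coefficients of the coanalytic part of $\psi$ with those of $\phi(\overline z)$ multiplied on the right by a fixed $A$. One must be careful about the side on which $A$ multiplies.

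Once (i) holds, we replace $H_\psi$ by $H_{\phi(\overline z)A}$, which is allowed because Hankel operators ignore analytic parts. We then use the vector-valued product formulas $H_{FG}=T_{\widetilde F}H_G+H_FT_G$ and $T_{F}T_G=T_{FG}$ when $G$ is analytic or $F$ is coanalytic, to rewrite $X$ as a single Hankel operator plus a Hankel-times-Toeplitz term. Setting $X=0$ and comparing symbols modulo $M_n(H^\infty)$ should give condition (ii), $\psi+A\phi^*$ analytic, together with condition (iii), $\phi(\overline z)A\phi(z)^*$ analytic. For (iii) the natural route is to note that (i) and (ii) force $\phi(\overline z)A+A\phi^*$ to be analytic. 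The remaining term in $X$ is then $H_{\phi(\overline z)A\phi^*}$, or a Toeplitz-compressed version of it, which must vanish.

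For sufficiency, we reverse the computation. Given (i)–(iii), expand $T_\phi H_\psi$ and $H_\psi T_{\phi^*}$ using the same product formulas, replacing $\psi$ by $\phi(\overline z)A$ in one product and by $-A\phi^*$ in the other, as (i) and (ii) allow. Their difference then collapses to a Hankel operator with symbol $\phi(\overline z)A\phi^*$ (up to analytic terms), and this vanishes by (iii). Regularity is needed only for the necessity direction.
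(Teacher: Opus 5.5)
Your overall strategy, a rank-$n$ identity for $XS-S^*X$ combined with regularity, matches the paper's use of Lemma \ref{LGU} and the tensor lemma. There is, however, a genuine gap in the necessity of (ii). Once (i) is known, use the product formula in its correct form, $H_{FG}=T_{F(\overline z)}H_G+H_FT_G$. Note that it is $F(\overline z)$ on the left, not the paper's $\widetilde F=F^*(\overline z)$. It gives $H_{\phi(\overline z)A}T_{\phi^*}=H_{\phi(\overline z)A\phi^*}-T_{\phi A}H_{\phi^*}$, hence $X=T_\phi H_G-H_{\phi(\overline z)A\phi^*}$ with $G=\phi(\overline z)A+A\phi^*$. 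So $X=0$ only says $T_\phi H_G=H_{\phi(\overline z)A\phi^*}$. The operator $T_\phi H_G$ has no symbol to compare modulo $M_n(H^\infty)$, and nothing in your sketch separates (ii) from (iii).

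Regularity has to be used a second time. Since $T_\phi H_G$ is now Hankel, $0=S^*T_\phi H_G-T_\phi H_GS=T_{\overline z\phi}E_0H_G$, i.e. $P(\overline z\phi)\,(H_Gh)(0)=0$ for all $h$. Regularity gives $E_0H_G=0$, so $E_0H_GS^k=E_0S^{*k}H_G=0$ for all $k$, and therefore $H_G=0$. Together with (i) this is (ii), and then $H_{\phi(\overline z)A\phi^*}=T_\phi H_G=0$ gives (iii). The paper takes the equivalent step by substituting the expansion of $H_\psi e_j$ back into the rank-$n$ tensor identity and using linear independence of $\{T_z^*T_\phi e_i\}$, this time as the \emph{first} tensor factors.

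The same formula with $H_G=0$ confirms your sufficiency argument: (i) and (ii) give $X=-H_{\phi(\overline z)A\phi^*}$, which vanishes by (iii). This is a cleaner route than the paper's verification of the two conditions of Proposition \ref{prop2}.

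The first step also needs repair. Your rank-$n$ identity has a sign error. From your own commutator identities, $XS-S^*X=-T_{\overline z\phi}E_0H_\psi-H_\psi E_0T_{\phi^*}S$, consistent with Lemma \ref{LGU}. So $X=0$ gives $P(\overline z\phi)(H_\psi h)(0)=-H_\psi\big((T_{\phi^*}zh)(0)\big)$. If you extracted (ii) from this identity, as the paper does, your sign would produce ``$\psi-A\phi^*$ analytic'' instead.

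Your extraction of $A$ also runs the wrong way. That the right side lies in the span of $\{H_\psi e_i\}$ is trivial. Injectivity of $c\mapsto P(\overline z\phi)c$ on the left does not by itself place $H_\psi e_j$ in the column span of $P(\overline z\phi)$. The relevant fact is that $h\mapsto E_0T_{\phi^*}Sh=(T_{\phi^*}zh)(0)$ maps onto $\mathbb C^n$, because its adjoint $c\mapsto S^*T_\phi c=P(\overline z\phi)c$ is injective by regularity. Hence every $H_\psi e_j$ lies in the column span of $P(\overline z\phi)$. Then $H_\psi$ and $H_{\phi(\overline z)A}$ agree on constants, hence everywhere, which is (i).
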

   Then, in Theorem \ref{Th3}, we obtain the self-adjoint characterization of the product of Toeplitz and Hankel operators. All these characterizations generalize the results of the same in the scalar-valued Hardy space achived by Li-Zheng-Ding in \cite{LZD}. 
   Alongside, in Theorem \ref{Th4},
   we also present a Toeplitz (Hankel) operator, being the skew-commutator of another Toeplitz (Hankel) operator, resulting in the vectorial generalization of the results obtained in \cite{LD}.

    The study of operators commuting with the unilateral shift on the Hardy space has been of interest. 
    It is well known that the class of commutants of unilateral shift is nothing but the algebra of analytic Toeplitz operators. This characterization underlines much of the subsequent theory of invariant subspaces and model theory. The skew commutators of the unilateral shift $S$ are nothing but the Hankel operators. Motivated by this, the following questions are natural to ask: 
    \begin{enumerate}
        \item What are the skew commutators of $S^*$?
        \item What are the operators whose skew commutator is $S$ (or $S^*$)?
    \end{enumerate}
    We give answers to the above questions in section \ref{section solutions of op eq}. 
    We show that the operators satisfying (2) are
    the block operators involving Toeplitz and Hankel operators.
    Additionally, we also find out the operator $X$ for which $SX$ is self adjoint. 
    Precisely, we have the following theorem.
    \begin{theorem*}[Theorem \ref{th1}]
        For a unilateral shift $S$ on $H^2$, we have
        \begin{enumerate}
            \item  $SX=XS^*$ if and only if $X=0$.
            \item $SX=X^*S^*$ if and only if $X=AS^*,$ for some self adjoint operator $A \in \mathcal{B}(H^2)$.
            \end{enumerate}
    \end{theorem*}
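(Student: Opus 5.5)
For part (1), I will compare the action of both sides on the wandering subspace and use the fact that $S$ is injective. Suppose $SX=XS^*$. Apply both sides to a vector $l$ in the wandering subspace $\mathcal{L}=\ker S^*$; for $H^2$ these are the constants. Since $S^*l=0$, we get $SXl=0$, and injectivity of $S$ gives $Xl=0$. Next I argue by induction on the decomposition $H^2=\bigoplus_{k\ge 0} S^k\mathcal{L}$. Assume $X$ vanishes on $S^j\mathcal{L}$ for all $j<k$. For $l\in\mathcal{L}$ we have $S^*S^kl=S^{k-1}l$, so
\[
SXS^kl=XS^{k-1}l=0,
\]
and again injectivity gives $XS^kl=0$. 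Hence $X$ vanishes on a dense set, and boundedness gives $X=0$. The converse is trivial.

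An alternative argument avoids the induction. Iterating the relation gives $S^nX=XS^{*n}$, so $\|X S^{*n}h\|=\|S^nXh\|=\|Xh\|$ for every $h$. But $S^{*n}h\to 0$ strongly, so $\|Xh\|\le \|X\|\,\|S^{*n}h\|\to 0$. I would present this second version, since it is shorter.

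For part (2), first check sufficiency. If $X=AS^*$ with $A=A^*$, then $X^*=SA$. Hence $SX=SAS^*$ and $X^*S^*=SAS^*$, so the relation holds.

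For necessity, suppose $SX=X^*S^*$, that is, $SX$ is self-adjoint. Set $A:=XS$. The first step is to show $X$ kills the wandering subspace. For $l\in\mathcal{L}$ we have $SXl=X^*S^*l=0$, so injectivity of $S$ gives $Xl=0$. Therefore $X=XSS^*+X(I-SS^*)=XSS^*=AS^*$.

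It remains to show $A$ is self-adjoint. From $SX=X^*S^*$ we get
\[
SAS^*=SXSS^*=SX=X^*S^*.
\]
Multiplying on the left by $S^*$ and using $S^*S=I$ gives $AS^*=S^*X^*S^*$. Multiplying on the right by $S$ then gives $A=S^*X^*=(XS)^*=A^*$. So $A$ is self-adjoint and $X=AS^*$.

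The only slightly delicate point is the vanishing of $X$ on $\ker S^*$, which is exactly where injectivity of $S$ is used. Once that is established, everything else is algebra with $S^*S=I$.
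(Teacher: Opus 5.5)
Your proof is correct and takes essentially the paper's route. For (1) you iterate the relation and use $S^{*n}\to 0$ strongly; the paper writes this as $X=S^{*n}XS^{*n}$, you as $S^nX=XS^{*n}$ together with the isometry of $S$. For (2) you set $A=XS$ and derive $A=A^*$ and $X=AS^*$ from $S^*S=I$, as the paper does; your preliminary step that $X$ vanishes on $\ker S^*$ is just another way of reading off $X=XSS^*$, which the paper gets directly from $X=S^*X^*S^*$.
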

    Similarly, in Theorem \ref{p-3} and \ref{th2}, 
    we find out the operator 
    $X$ for which $XS, XS^*,S^*X$ are self adjoint.
    
     Timotin in \cite{DT} described the invariant subspaces of \(S \oplus S^{*}\) 
    in the scalar-valued Hardy space using methods from the 
    Sz.-Nagy--Foiaş theory. Gu and Luo \cite{CGSL} later considered the 
vector-valued operator
$S_{E} \oplus S_{F}^*$
acting on \(H_{E}^{2} \oplus H_{F}^{2}\). They expressed its invariant 
subspaces as kernels or ranges of block operators whose entries involve 
Toeplitz and Hankel operators. These block operators arise naturally as
intertwiners between \(S \oplus S\) and \(S \oplus S^{*}\). These results suggest studying the corresponding operator equations 
directly. In Theorem \ref{skew of S+S*}, we determine the skew commutator of 
$S \oplus S^*$ and describe its elements in block operator form. 
We also examine the related inverse problem of identifying the bounded 
operators connected with $S \oplus S^*$ through the relevant 
skew-commutation relation in Theorem \ref{skew is S+S*} and \ref{self adjoint S+S*}. 
   
    \section{Skew Commutators}\label{section skew commutators}
    The proofs that we present in this section are using the techniques from \cite{CG}.
    Let us look at the very important Lemma from \cite{CG}, which we will be using throughout this section.
    \begin{lemma}[\cite{CG}]\label{LGU} 
        If $H_1$ and $H_2$ are two Hankel operators and $T_1$ and $T_2$ are two Toeplitz operators on $\ck$, then 
        \begin{equation*}
            S^*(T_1H_1-H_2T_2)-(T_1H_1-H_2T_2)S=S^*T_1\mathcal{D}^2_{S^*} H_1+H_2\mathcal{D}^2_{S^*} T_2S.
        \end{equation*}
    \end{lemma}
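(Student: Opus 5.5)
This is a direct algebraic computation using only the defining relations from the Definition: $S$ is an isometry ($S^*S=I$), a Toeplitz operator $T$ satisfies $S^*TS=T$, and a Hankel operator $H$ satisfies $S^*H=HS$. I also use the identity $SS^*=I-\mathcal{D}^2_{S^*}$, noted in the introduction, where $\mathcal{D}^2_{S^*}$ is the projection onto the wandering subspace. The plan is to expand both sides of the claimed identity and compare them term by term. I treat the two pieces $T_1H_1$ and $H_2T_2$ separately, since the left-hand side is linear in the difference $T_1H_1-H_2T_2$.

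For the first piece, compute $S^*T_1H_1 - T_1H_1S$. Using the Hankel relation, rewrite $H_1S=S^*H_1$, so the second term becomes $T_1S^*H_1$. Next I need to move $S^*$ past $T_1$. From $S^*T_1S=T_1$ we get $T_1S^* = S^*T_1SS^* = S^*T_1(I-\mathcal{D}^2_{S^*})$. Therefore
\begin{equation*}
S^*T_1H_1 - T_1S^*H_1 = S^*T_1H_1 - S^*T_1H_1 + S^*T_1\mathcal{D}^2_{S^*}H_1 = S^*T_1\mathcal{D}^2_{S^*}H_1 .
\end{equation*}

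For the second piece, compute $-(S^*H_2T_2 - H_2T_2S)$. By the Hankel relation, $S^*H_2=H_2S$, so this equals $-H_2ST_2 + H_2T_2S$. For the Toeplitz factor, $ST_2 = SS^*T_2S = (I-\mathcal{D}^2_{S^*})T_2S$. Substituting, the expression becomes $-H_2T_2S + H_2\mathcal{D}^2_{S^*}T_2S + H_2T_2S = H_2\mathcal{D}^2_{S^*}T_2S$.

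Adding the two pieces gives exactly the right-hand side of Lemma~\ref{LGU}. The only step needing care is commuting $S$ or $S^*$ past a Toeplitz operator. This is handled by inserting $SS^*=I-\mathcal{D}^2_{S^*}$ on the correct side, which produces the defect term. No other step is an obstacle.
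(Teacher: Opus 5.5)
Your computation is correct. Rewriting $H_1S=S^*H_1$ and $S^*H_2=H_2S$, and then using $T_1S^*=S^*T_1SS^*$ and $ST_2=SS^*T_2S$ together with $SS^*=I-\mathcal{D}^2_{S^*}$, gives exactly the two defect terms on the right-hand side.

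The paper gives no proof of its own here; it simply cites \cite{CG}. Your direct verification is the standard argument for this identity, and it never actually uses $S^*S=I$, only the Toeplitz and Hankel relations and the definition of $\mathcal{D}^2_{S^*}$.
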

    \subsection{When is a Toeplitz operator skew commutator of a Hankel Operator?}
    
    \begin{proposition}\label{prop1}
        Let $T$ be a Toeplitz operator and $H$ be a Hankel operator
        on $\ck$. Then $TH^* = HT$ if and only if the following conditions are satisfied:
        \begin{enumerate}
            \item  $S^*T\mathcal{D}^2_{S^*} H^*+H\mathcal{D}^2_{S^*} TS=0;$\label{1.1}
            \item $(TH^*-HT)\mathcal{D}^2_{S^*}=0.$\label{2.1}
        \end{enumerate}
        \end{proposition}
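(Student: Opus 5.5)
Set $X = TH^* - HT$. Since $H$ is Hankel, $S^*H = HS$, and taking adjoints gives $H^*S = S^*H^*$; hence $H^*$ is again a Hankel operator. This lets us apply Lemma \ref{LGU} with $T_1 = T$, $H_1 = H^*$, $H_2 = H$ and $T_2 = T$. It gives
\[
S^*X - XS = S^*T\mathcal{D}^2_{S^*}H^* + H\mathcal{D}^2_{S^*}TS .
\]
The whole proof rests on this identity.

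For necessity, suppose $TH^* = HT$, so that $X=0$. Then the left-hand side above vanishes, which is condition (\ref{1.1}), and $X\mathcal{D}^2_{S^*}=0$ is trivial, which is condition (\ref{2.1}).

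For sufficiency, assume (\ref{1.1}) and (\ref{2.1}). By the displayed identity, (\ref{1.1}) gives $S^*X = XS$. Iterating, $XS^k = S^{*k}X$ for all $k\ge 0$. Now take $l\in\mathcal{L} = \mathcal{D}^2_{S^*}\mathcal{K}$. Condition (\ref{2.1}) says $X$ vanishes on $\mathcal{L}$, since $\mathcal{D}^2_{S^*}$ is the orthogonal projection onto $\mathcal{L}$. Therefore
\[
XS^k l = S^{*k}Xl = 0 .
\]
So $X$ kills every $S^k\mathcal{L}$, and hence their linear span. Since $\mathcal{K} = \bigoplus_{k\ge0}S^k\mathcal{L}$ and $X$ is bounded, continuity gives $X=0$, that is, $TH^* = HT$.

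The only delicate step is the first one: one must check that $H^*$ is Hankel so that Lemma \ref{LGU} applies with the correct placement of operators. The remaining steps are a standard density argument using the wandering-subspace decomposition.
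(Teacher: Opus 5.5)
Your proof is correct and follows the paper's argument: Lemma \ref{LGU} with $T_1=T_2=T$, $H_1=H^*$, $H_2=H$ gives necessity, and for sufficiency both arguments reduce to $S^*X=XS$ together with $X\mathcal{D}^2_{S^*}=0$. The only difference is cosmetic and lies in the last step: the paper iterates $X=XSS^*=S^*XS^*$ to get $X={S^*}^nX{S^*}^n$ and uses ${S^*}^n\to 0$ strongly, while you use $XS^k={S^*}^kX$ to show that $X$ annihilates each $S^k\mathcal{L}$ and conclude by density of $\bigoplus_{k\ge 0}S^k\mathcal{L}$, and both amount to the purity of the shift.
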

        \begin{proof}
            Since $H$ is a Hankel operator, $H^*$ is also a Hankel operator. Assume that $TH^*=HT$ on $\ck.$ Statement (\ref{2.1}) is immediate. Setting $T_1=T_2=T$, $H_1=H^*$ and $H_2=H$ in Lemma \ref{LGU}, statement (\ref{1.1}) follows.
            Conversely, from statement (\ref{1.1}) and Lemma \ref{LGU} we get
            \begin{align}\label{extra}
                S^*(TH^*-HT)-(TH^*-HT)S=0.
            \end{align}
             Using statement (\ref{2.1}), we get
            \begin{align*}
                TH^*-HT=& (TH^*-HT)SS^*\\
                \overset{\eqref{extra}}{=}& S^*(TH^*-HT)S^*.
            \end{align*}
            Applying this recursively, we get
            \begin{align*}
                 TH^*-HT=&{S^*}^n(TH^*-HT){S^*}^n, \text{ for } n \in \mathbb{N}.
            \end{align*}
            \begin{align*}
                ||(TH^*-HT)x||&=||{S^*}^n(TH^*-HT){S^*}^nx||\\
                &\leq||(TH^*-HT)|| ||{S^*}^nx||.
            \end{align*}
            As ${S^*}^n \xrightarrow{\text{SOT}} 0,$ we get $TH^*=HT$.
            \end{proof}
    
    \begin{definition}[\cite{CG}]
        Let $F$ be $m\times m$ matrix with entries from $L^2.$ The matrix valued function $F$ is said to be regular if $m$ column vectors of $F$ are linearly independent in $L^2_m.$
    \end{definition}
    The unilateral shift $S$ on $H^2_n$ is nothing but $T_{zI}$ where $I$ denotes the $n \times n$
    identity matrix. 
    We will denote the operator $T_{zI}$ simply by $T_z$.
    The wandering subspace for $T_{z}$ is $\mathbb{C}^n.$
    Let $\{e_1,e_2,\dots,e_n\}$ be the standard basis for the wandering subspace. Thus
            \begin{equation*}
                \mathcal{D}^2_{T^*_z}=I-T_zT_z^*=\sum_{i=1}^{n} e_i\otimes e_i.
            \end{equation*}
    For $x,y \in \mathcal{H},$ the rank one operator $ x\otimes y$ on $\mathcal{H}$ is defined as 
    $$
        (x\otimes y) (z)=\langle z, y\rangle x.
    $$
    We will be using the following well known lemma throughout this section
    \begin{lemma}\label{Tensor lemma}
       Let $x_i,y_i,z_i,w_i \in \mathcal{H},$ for $1 \leq i\leq n.$ Suppose that $\{y_i: 1\leq i \leq n\}$ is linearly independent and
        $$
            \sum_{i=1}^{n}(x_i \otimes y_i)=\sum_{i=1}^{n} (z_i \otimes w_i).
        $$
        Then for each $j$, there exist scalars $a_{ij},~ 1 \leq i \leq n$, such that 
        $$
            x_j=\sum_{i=1}^{n} a_{ij}z_i.
        $$
     \end{lemma}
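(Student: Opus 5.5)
The plan is to apply both sides of the hypothesis to an arbitrary vector $u\in\mathcal{H}$ and then test against the functionals $\langle\cdot,w_i\rangle$. First I would rewrite the equality as
$$\sum_{i=1}^n \langle u,y_i\rangle x_i=\sum_{i=1}^n\langle u,w_i\rangle z_i\qquad\text{for all } u\in\mathcal{H}.$$

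Next I would use the linear independence of $\{y_1,\dots,y_n\}$ to choose a dual family: vectors $u_1,\dots,u_n\in\operatorname{span}\{y_i\}$ with $\langle u_j,y_i\rangle=\delta_{ij}$. To construct them, note that the Gram matrix $G=(\langle y_k,y_i\rangle)_{i,k}$ is invertible precisely because the $y_i$ are independent. Writing $u_j=\sum_k c_{kj}y_k$, the biorthogonality condition is a linear system with matrix $G$, and it is solved by $C=G^{-1}$ (up to the conjugation convention forced by the inner product). This is the only place the hypothesis is used, and it is the step needing the most care, though it is routine.

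Then I would substitute $u=u_j$ into the displayed identity. The left side collapses to $x_j$, so
$$x_j=\sum_{i=1}^n \langle u_j,w_i\rangle z_i .$$
Setting $a_{ij}:=\langle u_j,w_i\rangle$ gives the required scalars, which completes the plan. No independence of the $w_i$ or $z_i$ is needed. The only potential subtlety is keeping the linear/antilinear slot of $\langle\cdot,\cdot\rangle$ consistent with the paper's convention $(x\otimes y)(z)=\langle z,y\rangle x$, which I would check when writing out $u_j$.
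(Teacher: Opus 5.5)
The paper gives no proof of this lemma; it only calls it well known. Your argument is correct and complete, and it is the standard proof.

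On the one point you flagged: the paper's inner product is linear in the first slot. With $G_{ik}=\langle y_k,y_i\rangle$ and $u_j=\sum_k c_{kj}y_k$, you get $\langle u_j,y_i\rangle=(GC)_{ij}$, so $C=G^{-1}$ works with no extra conjugation. $G$ is invertible because $Gc=0$ forces $\langle v,v\rangle=0$ for $v=\sum_k c_k y_k$, and then $c=0$ by independence.

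An equivalent phrasing is that independence of the $y_i$ makes $u\mapsto(\langle u,y_i\rangle)_{i=1}^n$ onto $\mathbb{C}^n$. Hence the range of the left-hand operator is all of $\mathrm{span}\{x_i\}$, while the right-hand side has range inside $\mathrm{span}\{z_i\}$.

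Your explicit choice $a_{ij}=\langle u_j,w_i\rangle$ goes slightly beyond the bare existence statement. It also handles the way the lemma is applied in the paper, for instance in the proof of Theorem \ref{Th1}, where a minus sign on one side of the tensor identity is absorbed into the $z_i$.
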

    
    \begin{theorem}\label{Th1}
        Assume that both $P(\overline z\phi(z))$ and $P(\overline z\phi^*(z))$ are regular. Then $T_\phi H_\psi^* =H_\psi T_\phi$  if and only if there exists $A\in M_n(\mathbb{C})$ such that the following holds:
        \begin{enumerate}[(i)]
            \item $\psi(z)-\phi(\overline z)A$ is analytic;\label{1i}
            \item $\tilde\psi(z)+A\phi(z)$ is analytic;\label{1ii}
            \item $\phi(\overline z)A\phi(z)$ is analytic.\label{1iii}
        \end{enumerate}
    \end{theorem}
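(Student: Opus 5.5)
Set $T=T_\phi$ and $H=H_\psi$, with $\psi=(I-P)\psi$ as the paper's convention allows. Proposition \ref{prop1} says $T_\phi H_\psi^*=H_\psi T_\phi$ is equivalent to the pair of conditions (1) $S^*T_\phi\mathcal{D}^2_{S^*}H_\psi^*+H_\psi\mathcal{D}^2_{S^*}T_\phi S=0$ and (2) $(T_\phi H_\psi^*-H_\psi T_\phi)\mathcal{D}^2_{S^*}=0$. The plan is to turn (1) into a finite-rank identity, extract the matrix $A$ from it with Lemma \ref{Tensor lemma}, and then show that (2) amounts to condition (iii).

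\textbf{Rewriting (1).} Use $\mathcal{D}^2_{S^*}=\sum_i e_i\otimes e_i$ and the rules $B(x\otimes y)=Bx\otimes y$ and $(x\otimes y)C=x\otimes C^*y$. Then (1) reads
\[
\sum_{i=1}^n S^*T_\phi e_i\otimes H_\psi e_i=-\sum_{i=1}^n H_\psi e_i\otimes S^*T_{\phi^*}e_i ,
\]
since $H_\psi^{**}=H_\psi$ and $S^*T_\phi^*=T_{\phi^*}$-type adjoints give $(T_\phi S)^*=S^*T_{\phi^*}$. Here:
\begin{itemize}
\item $S^*T_\phi e_i$ is the $i$-th column of $P(\overline z\phi)$;
\item $S^*T_{\phi^*}e_i$ is the $i$-th column of $P(\overline z\phi^*)$;
\item $H_\psi e_i$ is the $i$-th column of $P(\overline z\psi(\overline z))$.
\end{itemize}

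\textbf{Extracting $A$.} Regularity of $P(\overline z\phi^*)$ makes the right-hand second factors linearly independent. Applying Lemma \ref{Tensor lemma} (after swapping sides and taking adjoints of the rank-one sums, so the independent vectors sit in the required slot), each $H_\psi e_j$ is a linear combination of the columns $S^*T_\phi e_i$. This gives $P(\overline z\psi(\overline z))=P(\overline z\phi)\,A$ for a constant matrix $A$, i.e. $\overline z\psi(\overline z)-\overline z\phi(z)A$ is analytic. Replacing $z$ by $\overline z$ and using $\psi=(I-P)\psi$ yields (i). The linear relation requires $\psi$ to be normalized so that $H_\psi$ only sees the coanalytic part; the identification $\psi\leftrightarrow\overline z\psi(\overline z)$ has to be handled carefully.

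Substituting back and using independence of the columns of $P(\overline z\phi)$, the remaining identity becomes $\sum_i S^*T_\phi e_i\otimes\bigl(H_\psi e_i+\sum_k \overline{a_{ik}}\,S^*T_{\phi^*}e_k\bigr)=0$. Hence $H_\psi^{*}$ relates to $-A$ applied to $P(\overline z\phi^*)$, which after flipping is exactly (ii) in the form $\widetilde\psi+A\phi$ analytic. I expect this step to be the main obstacle: keeping the transposes and conjugates straight in Lemma \ref{Tensor lemma} so that the same $A$ appears in (i) and (ii), rather than $A^*$ or $A^T$. Conversely, (i) and (ii) immediately give the rank-one identity, hence (1).

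\textbf{Condition (2) and (iii).} Using (i), write $H_\psi=H_{\phi(\overline z)A}$, and similarly rewrite $H_\psi^*$ via (ii). Then compute $(T_\phi H_\psi^*-H_\psi T_\phi)e_j$ using the vector-valued identities $T_FT_G=T_{FG}$ for analytic $G$ and $H_F T_G=H_{FG}$ for analytic $G$ (applied to constant vectors, where $T_\phi e_j=P(\phi e_j)$). The result should reduce to $P\bigl(\overline z\,\phi(\overline z)A\phi(z)\bigr)$ applied to $e_j$, up to sign and flips. So (2) holds exactly when the coanalytic part of $\phi(\overline z)A\phi(z)$ vanishes, which is (iii). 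Combining with Proposition \ref{prop1} gives both directions.
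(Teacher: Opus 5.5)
Your proposal is correct and follows essentially the paper's own route: Proposition \ref{prop1}, the rank-one form of condition (1), Lemma \ref{Tensor lemma} with regularity of $P(\overline z\phi^*)$ to obtain (i), substitution plus regularity of $P(\overline z\phi)$ to obtain (ii) with the same $A$, and evaluation of $T_\phi H_\psi^*-H_\psi T_\phi$ on the constants $e_j$ for (iii). Two small corrections: $P(\overline z\psi(\overline z))=P(\overline z\phi A)$ says the difference is annihilated by $P$ (not that it is analytic), and the evaluation on constants is exactly $-P(\overline z\phi(z)A\phi(\overline z))$, which follows from the identity $zP(\overline z A\phi(\overline z))+A(P\phi)(\overline z)=A\phi(\overline z)$ that the paper isolates.
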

        \begin{proof}
        Let $A$ be any matrix of order $n$. Let $\phi \in \li(M_n(\mathbb{C}))$ such that
        $$
        \phi(z)=\sum_{-\infty}^{\infty} z^n\phi_n.
         $$
        Note that 
            \begin{align*}
                A(P\phi)(\overline z)&= \sum_{n=0}^{\infty} \overline{z} ^n A\phi_n\\
                &=A\phi(0)+(I-P)(A\phi(\overline z))
             \end{align*}
        and
             $$
            zP(\overline z A\phi(\overline z))=P(A\phi(\overline z)) -A\phi(0).
            $$
        Hence we get
        \begin{equation}\label{symbol-1}
            zP(\overline z A\phi(\overline z))+A(P\phi)(\overline z)=A\phi(\overline z).
        \end{equation}
        
        Let (\ref{1i}), (\ref{1ii}) and (\ref{1iii}) holds for some matrix $A$ of order $n.$ Using statement (\ref{1i}) and (\ref{1ii}) it follows that
        \begin{align*}
                T_{\phi(z)} H^*_{\psi(z)}-H_{\psi(z)} T_{\phi(z)}&=T_{\phi(z)} H_{\tilde \psi(z)}-H_{\phi(\overline z)A}T_{\phi(z)}\\
                &=T_{\phi(z)} H_{-A\phi(z)}-H_{\phi(\overline z)A}T_{\phi(z)}.
        \end{align*}
         This gives
        \begin{align*}
                [(T_{\phi(z)} H^*_{\psi(z)}-H_{\psi(z)} T_{\phi(z)})e_k]_{k=1}^{n}&=
               P(\phi(z)P(-\overline z A \phi (\overline z)))-P(\overline{z}\phi(z)A(P\phi)(\overline z))\\
                &=P[-\overline z\phi(z)(zP(\overline z A\phi(\overline z))+A(P\phi)(\overline z))].
        \end{align*}
        Using equation (\ref{symbol-1}) we get
        \begin{equation}\label{Symbol comparison 1}
                [(T_{\phi(z)} H^*_{\psi(z)}-H_{\psi(z)} T_{\phi(z)})e_k]_{k=1}^{n}=P(-\overline z\phi(z)A\phi(\overline z)).
        \end{equation}
         By statement (\ref{1iii}) we have 
        $$ 
            (T_{\phi(z)} H^*_{\psi(z)}-H_{\psi(z)} T_{\phi(z)})e_k=0, 
        $$
        for every $k=1,\dots,n.$ Next by the virtue of Proposition \ref{prop1} it is enough to show statement (\ref{1.1}). By statements (\ref{1i}) and (\ref{1ii}) we have 
        \begin{align*}
            P(\overline z (\psi(\overline z)-\phi(z)A))=0
        \end{align*}
        and 
        \begin{align*}
            P(\overline z (\psi(\overline z)+\phi^*(z)A^*))=0,
        \end{align*}
        respectively. Therefore
        \begin{align*}
            H_\psi e_j = \sum_{i=1}^{n}a_{ij}T_z^*T_\phi e_i =-\sum_{i=1}^{n}\overline{a_{ji}}T^*_zT^*_\phi e_i.
        \end{align*}
        This gives 
        \begin{align*}
            H_\psi D^2_{{T_z}^*} T_\phi T_z &= H_\psi (\sum_{i=1}^{n} e_i\otimes e_i) {T_\phi}T_z\\
            &=\sum_{i=1}^{n}H_\psi e_i \otimes T_z^*T^*_\phi e_i\\
            &=\sum_{i=1}^{n}(\sum_{j=1}^{n}a_{ji}{T_z}^*T_\phi e_j) \otimes T_z^*T^*_\phi e_i\\
            &=\sum_{j=1}^{n}\sum_{i=1}^{n}{T_z}^*T_\phi e_j \otimes \overline{a_{ji}}T_z^*T^*_\phi e_i\\
            &=\sum_{j=1}^{n}({T_z}^*T_\phi e_j \otimes \sum_{i=1}^{n} \overline{a_{ji}}T_z^*T^*_\phi e_i)\\
            &=-\sum_{j=1}^{n}{T_z}^*T_\phi e_j \otimes H_\psi e_j\\
            &=-T_z^*T_\phi (\sum_{i=1}^{n} e_i\otimes e_i) H_\psi^*\\
            &=-T_z^*T_\phi D^2_{T_z*} H_\psi^*.
        \end{align*}
        Conversely, let us assume that $T_\phi H^*_\psi =H_\psi T_\phi$. Using Proposition \ref{prop1} we have 
           \begin{align}\label{eq1}
             \sum_{i=1}^{n} (T_z^*T_\phi e_i \otimes H_\psi e_i + H_\psi e_i \otimes T^*_zT^*_\phi e_i)=0.
           \end{align}
            Since $P(\overline z \phi^*(z))$ is regular, $\{T^*_zT^*_\phi e_i\}_{i=1}^{n}$ is linearly independent. By Lemma \ref{Tensor lemma} for each $1\leq j\leq n,$
            there exist scalars $a_{ij}$ such that
            \begin{align}
                H_\psi e_j = \sum_{i=1}^{n}a_{ij}T_z^*T_\phi e_i
                =T^*_z\sum_{i=1}^{n}a_{ij}T_\phi e_i\label{eqo}.
            \end{align}
            So we get
            \begin{align*}
                [H_{\psi(z)}e_i]_{i=1}^{n}=P(\overline z \phi(z)A),
             \end{align*}
             Equivalently
             \begin{align*}
                 H_{\psi(z)}=H_{\phi(\overline z)A}.
             \end{align*}
            This gives $\psi(\overline z)-\phi(z)A$ is conjugate analytic, equivalently $\psi(z)-\phi(\overline z)A$ is analytic.
            Now substituting equation (\ref{eqo}) in equation (\ref{eq1}), we get
            \begin{align*}
            -\sum_{j=1}^{n} (T_z^*T_\phi e_j \otimes H_\psi e_j )=\sum_{j=1}^{n}\sum_{i=1}^{n}(a_{ij}T_z^*T_\phi e_i \otimes T^*_zT^*_\phi e_j)\\
            = \sum_{j=1}^{n}\sum_{i=1}^{n}(T_z^*T^*_\phi e_i \otimes \overline{a_{ij}}T^*_zT^*_\phi e_j)\\
            = \sum_{i=1}^{n}(T_z^*T_\phi e_i \otimes \sum_{j=1}^{n}\overline{a_{ij}}T^*_zT^*_\phi e_j)\\
             = \sum_{j=1}^{n}(T_z^*T_\phi e_j \otimes \sum_{i=1}^{n}\overline{a_{ji}}T^*_zT^*_\phi e_i).
            \end{align*}
            Since $P(\overline z \phi(z))$ is regular, for each $1 \leq j \leq n$
            \begin{equation*}
                 H_\psi e_j=-\sum_{i=1}^{n}\overline{a_{ji}}T^*_zT^*_\phi e_i\\
                 =-T^*_z\sum_{i=1}^{n}\overline{a_{ji}}T^*_\phi e_i.\\
            \end{equation*}
            In terms of symbol, it gives
            \begin{equation*}
                 P(\overline z\psi(\overline z))=[H_{\psi(z)}e_j]_{j=1}^{n}= -P(\overline z \phi^*(z)A^*).
            \end{equation*}
            This is equivalent to say $\psi(\overline z)+\phi^*(z)A^*$ is conjugate analytic. In other words, $ \tilde \psi (z)+A\phi(z)$ is analytic. We have
            \begin{align*}
                0&=T_\phi H_\psi^*-H_\psi T_\phi =T_\phi H_{\tilde{\psi}(z)}-H_{\phi(\overline z)A}T_{\phi(z)}=-T_\phi H_{A\phi(z)}-H_{\phi(\overline z)A}T_{\phi(z)}.
            \end{align*}
            Using equation (\ref{Symbol comparison 1}) we get $P(\overline z\phi(z)A\phi(\overline{z}))=0$. It follows that  $\phi(z)A\phi(\overline{z})$ is conjugate analytic. Equivalently $\phi(\overline{z})A\phi(z)$ is analytic.
    \end{proof} 
    \subsection{When is a Hankel operator skew commutator of a Toeplitz operator?}
    \begin{proposition}\label{prop2}
        Let $T$ be a Toeplitz operator and $H$ be a Hankel operator on $\ck$. Then $TH=HT^*$ if and only if the following conditions are satisfied:
        \begin{enumerate}
            \item $S^*T\mathcal{D}^2_{S^*} H+H\mathcal{D}^2_{S^*} T^*S=0;$\label{1.2}
            \item $(TH-HT^*)\mathcal{D}^2_{S^*}=0.$\label{2.2}
        \end{enumerate}
        \begin{proof}
             Since $T$ is Toeplitz, $T^*$ is also a Toeplitz operator. Assume that $TH=HT^*$ on $\ck$. Statement (\ref{2.2}) is immediate. Setting $T_1=T$, $T_2=T^*$, and  $H_1=H_2=H$ in Lemma \ref{LGU}, we get the statement (\ref{1.2}). Conversely using the statements (\ref{1.2}), (\ref{2.2}) and Lemma \ref{LGU} we have
             \begin{align*}
                 TH-HT^*=S^*(TH-HT^*)S^*.
             \end{align*}
            The similar argument as given in the Proposition \ref{prop1} completes the proof.
        \end{proof}
    \end{proposition}
    
    \begin{theorem}\label{Th2}
        Assume that $P(\overline z \phi(z))$ is regular. Then $T_\phi H_\psi =H_\psi T_\phi^*$  if and only if there exist $A\in M_n(\mathbb{C})$ such that the following holds:
        \begin{enumerate}[(i)]
            \item $\psi(z)-\phi ( \overline z)A$ is analytic;\label{2i}
            \item $\psi(z)+A\phi^*(z)$ is analytic;\label{2ii}
            \item $\phi(\overline z)A\phi(z)^*$ is analytic.\label{2iii}
       \end{enumerate}
       \end{theorem}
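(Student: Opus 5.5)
The proof will follow the template of Theorem \ref{Th1}, now built on Proposition \ref{prop2} (with $S=T_z$, $T=T_\phi$, $H=H_\psi$). The computations rely on the identity (\ref{symbol-1}) and on $\mathcal{D}^2_{T_z^*}=\sum_{i=1}^n e_i\otimes e_i$.

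\emph{Sufficiency.} Assume (i)--(iii) hold for some $A$. By (i), $H_\psi=H_{\phi(\overline z)A}$. By (ii), $H_\psi=H_{-A\phi^*(z)}$. Using the second form of $H_\psi$ in $T_\phi H_\psi$ and the first in $H_\psi T_\phi^*=H_\psi T_{\phi^*}$, I will apply both sides to $e_k$. Then I will use (\ref{symbol-1}) with $\phi^*$ in place of $\phi$, as in the derivation of (\ref{Symbol comparison 1}). This should give
\[
[(T_\phi H_\psi-H_\psi T_\phi^*)e_k]_{k=1}^n = P\bigl(-\overline z\,\phi(z)A\phi^*(\overline z)\bigr).
\]
This vanishes by (iii), since $\phi(z)A\phi^*(\overline z)$ is the reflection of $\phi(\overline z)A\phi(z)^*$, and that function is analytic. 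Hence condition (\ref{2.2}) holds. For (\ref{1.2}), I will translate (i) and (ii) into the vector identities
\[
H_\psi e_j=\sum_i a_{ij}T_z^*T_\phi e_i \quad\text{and}\quad H_\psi^* e_j=-\sum_i \overline{a_{ji}}\,T_z^*T_\phi e_i .
\]
The second comes from $\widetilde\psi=-\phi(\overline z)A^*$ modulo analytic functions, and (ii) gives exactly this. I will then expand $T_z^*T_\phi\mathcal{D}^2 H_\psi=\sum_i T_z^*T_\phi e_i\otimes H_\psi^* e_i$ and $H_\psi\mathcal{D}^2T_\phi^*T_z=\sum_i H_\psi e_i\otimes T_z^*T_\phi e_i$, substitute, and reindex the double sums to show that they cancel. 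Proposition \ref{prop2} then gives $T_\phi H_\psi=H_\psi T_\phi^*$.

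\emph{Necessity.} Proposition \ref{prop2}(\ref{1.2}) gives
\[
\sum_i\bigl(T_z^*T_\phi e_i\otimes H_\psi^*e_i+H_\psi e_i\otimes T_z^*T_\phi e_i\bigr)=0 .
\]
Regularity of $P(\overline z\phi)$ says that $\{T_z^*T_\phi e_i\}$ is linearly independent. This is the only independent family appearing in both slots, which explains why only one regularity hypothesis is needed. Lemma \ref{Tensor lemma} then yields a matrix $A$ with $H_\psi e_j=\sum_i a_{ij}T_z^*T_\phi e_i$, which is the symbol statement $P(\overline z\psi(\overline z))=P(\overline z\phi(z)A)$, i.e.\ (i). Substituting back and reindexing gives $\sum_j T_z^*T_\phi e_j\otimes\bigl(H_\psi^*e_j+\sum_i\overline{a_{ji}}T_z^*T_\phi e_i\bigr)=0$. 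Independence again forces $H_\psi^*e_j=-\sum_i\overline{a_{ji}}T_z^*T_\phi e_i$, i.e.\ $H_{\widetilde\psi}=H_{-\phi(\overline z)A^*}$. Taking $\widetilde{\ }$ of this, it is equivalent to $\psi+A\phi^*$ being analytic, which is (ii). Finally, with (i) and (ii) in hand, the symbol computation from the sufficiency part combined with Proposition \ref{prop2}(\ref{2.2}) gives $P(\overline z\phi(z)A\phi^*(\overline z))=0$, which is (iii).

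\emph{Main obstacle.} I expect the main difficulty to be the bookkeeping in the symbol calculus rather than any conceptual step. Three points need care: the adjoint/tilde conversions ($H_\psi^*=H_{\widetilde\psi}$, and $\widetilde{A\phi^*}=\phi(\overline z)A^*$); placing $A$ on the correct side given noncommuting matrices; and checking that the version of (\ref{symbol-1}) for $\phi^*$ combines $P(\phi\,P(\cdot))$ and $P(\overline z\,\cdot\,(P\phi^*)(\overline z))$ into the single term $-\overline z\phi(z)A\phi^*(\overline z)$. I will verify these by writing out Fourier coefficients in $e_k$ columns.
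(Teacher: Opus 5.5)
Your proposal is correct and follows the paper's proof essentially step for step. Proposition \ref{prop2} reduces the problem to the rank-$n$ identity together with the action on constants. Lemma \ref{Tensor lemma}, applied with the single independent family $\{T_z^*T_\phi e_i\}$, yields (i) and then (ii). The symbol identity $[(T_\phi H_\psi-H_\psi T_\phi^*)e_k]_{k=1}^n=-P(\overline z\phi(z)A\phi^*(\overline z))$ then handles (iii) in both directions.

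The only difference is cosmetic. In the sufficiency part you substitute $H_{-A\phi^*(z)}$ into $T_\phi H_\psi$, exactly as in Theorem \ref{Th1}. The paper instead writes both factors as $H_{\phi(\overline z)A}$ and uses (ii) tacitly when it combines terms via (\ref{symbol-1}), so your version makes that step more transparent.
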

        \begin{proof}
             Let us first assume that the statements (\ref{2i}),(\ref{2ii}) and (\ref{2iii}) hold for some matrix $A$ of order $n.$ By the virtue of Proposition \ref{prop2} we first show
             $$
             [(T_\phi H_\psi-H_\psi T^*_\phi)e_i]_{i=1}^{n}=0.
             $$
            Using statements (\ref{2i}) and (\ref{2ii}) we have
            \begin{align*}
                T_\phi H_\psi-H_\psi T^*_\phi&=T_\phi H_{\phi(\overline z)A}-H_{\phi(\overline z)A}T_{\phi^*(z)}.
                \end{align*}
                Also,
                \begin{align*}
                P(\phi(z)P(\overline z \phi(z)A))-P(\overline{z}\phi(z)A(P\phi^*)(\overline z))
                &=P(\overline z\phi(z)[zP(\overline z \phi(z)A)-A(P\phi^*)(\overline z)])\\
                &=-P(\overline z\phi(z)A\phi^*(\overline{z})).
            \end{align*}
             This gives
            \begin{equation}\label{symbol comparision 2}
                 [(T_\phi H_\psi-H_\psi T^*_\phi)e_i]_{i=1}^{n}= -P(\overline z\phi(z)A\phi^*(\overline{z}))=0
            \end{equation}
            The last equality is due to statement (\ref{2iii}). The statements (\ref{2i}) and (\ref{2ii}) gives
            \begin{align*}
                P(\overline z (\psi(\overline z)-\phi(z)A))=0
            \end{align*}
            and 
            \begin{align*}
                P(\overline z (\psi^*( z)+\phi(z)A^*))=0,
            \end{align*}
            respectively. Thus
            \begin{align*}
                H_\psi e_j = \sum_{i=1}^{n}a_{ij}T_z^*T_\phi e_i, ~H^*_\psi e_j=-\sum_{i=1}^{n}\overline{a_{ji}}T^*_zT_\phi e_i.
            \end{align*}
            Using the similar arguments as in Theorem \ref{Th1} we have
            \begin{align*}
               H_\psi \mathcal{D}^2_{T_z^*}T^*_\phi T_z&=\sum_{i=1}^{n} H_\psi e_i \otimes T^*_zT_\phi e_i=-T_z^*T_\phi \mathcal{D}^2_{T_z^*}H_\psi.
            \end{align*}
             Conversely, let us assume that $T_\phi H_\psi =H_\psi T^*_\phi$.
            By Proposition \ref{prop2} we have 
           \begin{align} 
               0=&T_z^*T_\phi (\sum_{i=1}^{n} e_i\otimes e_i) H_\psi +H_\psi (\sum_{i=1}^{n} e_i\otimes e_i) {T^*_\phi}T_z \nonumber\\
              =&\sum_{i=1}^{n} (T_z^*T_\phi e_i \otimes H^*_\psi e_i + H_\psi e_i \otimes T^*_zT_\phi e_i)\label{eq2}
           \end{align}
            Since $P(\overline z \phi(z))$ is regular, $\{T^*_zT_\phi e_i\}_{i=1}^{n}$ is linearly independent. By Lemma \ref{Tensor lemma} for $1\leq j \leq n$ there exist scalars $a_{ij}$ such that
            \begin{align*}
                H_\psi e_j = \sum_{i=1}^{n}a_{ij}T_z^*T_\phi e_i
                =T^*_z\sum_{i=1}^{n}a_{ij}T_\phi e_i.
            \end{align*}
            Therefore $H_\psi=H_{\phi(\overline{z})A}$. It gives $\psi(\overline{z})-\phi(z)A$ is conjugate analytic. Equivalently $\psi(z)-\phi(\overline z)A$ is analytic. Also,
            \begin{align*}
            \sum_{i=1}^{n} (H_\psi e_i \otimes T_z^*T_\phi e_i )=\sum_{i=1}^{n}(\sum_{j=1}^{n}a_{ji}T_z^*T_\phi e_j) \otimes T^*_zT_\phi e_i\\
            = \sum_{i=1}^{n}(\sum_{j=1}^{n}T_z^*T_\phi e_j \otimes \overline{a_{ji}}T^*_zT_\phi e_i)\\
            = \sum_{j=1}^{n}(T_z^*T_\phi e_j \otimes \sum_{i=1}^{n}\overline{a_{ji}}T^*_zT_\phi e_i)\\
             = \sum_{i=1}^{n}(T_z^*T_\phi e_i \otimes \sum_{j=1}^{n}\overline{a_{ij}}T^*_zT_\phi e_j).
            \end{align*}
            Comparing it with equation (\ref{eq2}), we have
            \begin{equation*}
                \sum_{i=1}^{n} (T_z^*T_\phi e_i \otimes H^*_\psi e_i)=-\sum_{i=1}^{n}(T_z^*T_\phi e_i \otimes \sum_{j=1}^{n}\overline{a_{ij}}T^*_zT_\phi e_j).
            \end{equation*}
            Since $P(\overline z \phi(z))$ is regular, for each $1\leq i\leq n$
            \begin{equation*}
                 H^*_\psi e_i=-\sum_{j=1}^{n}\overline{a_{ij}}T^*_zT_\phi e_j.\\
            \end{equation*}
            Writing it as operators, we get
                 $H_\psi^*=H_{-\phi(\overline z)A^*}$
            It is equivalent to say $\tilde \psi(\overline z)+\phi(z)A^*$ is conjugate analytic. In other words, $\psi(z)+A\phi^*(z)$ is analytic. Next by equation (\ref{symbol comparision 2}) we get $P(\overline z\phi(z)A\phi^*(\overline{z}))=0.$ This gives $\phi(z)A\phi^*(\overline{z})$ is conjugate analytic, in other words $\phi(\overline z)A\phi^*(z)$ is analytic.
        \end{proof}
    
    \subsection{When is the product of Hankel and Toeplitz operators self-adjoint?}
    \begin{proposition}\label{prop3}
        Let $T$ be a Toeplitz operator, and let $H$ be a Hankel operator on $\ck$. Then $HT=T^*H^*$ if and only if the following conditions are satisfied:
        \begin{enumerate}
            \item $S^*T^*\mathcal{D}^2_{S^*} H^*+H\mathcal{D}^2_{S^*} TS=0;$\label{1.3}
            \item $(HT-T^*H^*)\mathcal{D}^2_{S^*}=0.$\label{2.3}
        \end{enumerate}
        \end{proposition}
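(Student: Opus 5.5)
The proof follows the pattern of Propositions \ref{prop1} and \ref{prop2}. The only real work is choosing the right specialization of Lemma \ref{LGU}.

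Note first that $T^*$ is again a Toeplitz operator, since $S^*T^*S=(S^*TS)^*=T^*$. Likewise $H^*$ is again a Hankel operator, since $S^*H^*=(HS)^*=(S^*H)^*=H^*S$. We now apply Lemma \ref{LGU} with $T_1=T^*$, $H_1=H^*$, $H_2=H$ and $T_2=T$. With these choices $T_1H_1-H_2T_2=T^*H^*-HT$, and the lemma gives
\begin{equation*}
S^*(T^*H^*-HT)-(T^*H^*-HT)S=S^*T^*\mathcal{D}^2_{S^*}H^*+H\mathcal{D}^2_{S^*}TS .
\end{equation*}
For the forward direction, assume $HT=T^*H^*$. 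Then the left-hand side above vanishes, which gives statement (\ref{1.3}). Statement (\ref{2.3}) holds trivially, since $HT-T^*H^*=0$.

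For the converse, set $X=HT-T^*H^*$. Statement (\ref{1.3}) together with the identity above gives $S^*X=XS$ (the sign change from $-X$ to $X$ is harmless). Statement (\ref{2.3}) says $X\mathcal{D}^2_{S^*}=X(I-SS^*)=0$, so $X=XSS^*$. Combining the two,
\begin{equation*}
X=XSS^*=S^*XS^*,
\end{equation*}
and iterating gives $X={S^*}^nX{S^*}^n$ for all $n\in\mathbb{N}$.

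Consequently $\|Xx\|\le\|X\|\,\|{S^*}^nx\|$ for every $x$. Since $S$ is a unilateral shift, ${S^*}^n\to 0$ in the strong operator topology. Letting $n\to\infty$ gives $Xx=0$ for all $x$, so $HT=T^*H^*$.

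The only step needing care is the bookkeeping in the choice $T_1=T^*$, $H_1=H^*$. One must check that $T^*$ and $H^*$ genuinely satisfy the Toeplitz and Hankel relations, so that Lemma \ref{LGU} applies, and that the resulting right-hand side matches statement (\ref{1.3}) exactly. Everything else is the same SOT argument already used in Proposition \ref{prop1}.
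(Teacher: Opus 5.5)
Your proposal is correct and follows the paper's proof essentially verbatim. You use the same specialization $T_1=T^*$, $T_2=T$, $H_1=H^*$, $H_2=H$ of Lemma~\ref{LGU} for the forward direction, and for the converse the same recursion $X=S^*XS^*$ together with ${S^*}^n\to 0$ in SOT forces $X=HT-T^*H^*=0$. Your explicit checks that $T^*$ is Toeplitz and $H^*$ is Hankel, and the step $X=XSS^*=S^*XS^*$, fill in details the paper only sketches by pointing to Proposition~\ref{prop1}.
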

        \begin{proof}
             Note that $H^*$ is a Hankel operator and $T^*$ is a Toeplitz operator. Assume that $HT=T^*H^*$ on $\ck$. The statement (\ref{2.3}) follows immediately. Setting $T_1=T^*$, $T_2=T$, $H_1=H^*$ and $H_2=H$ in Lemma \ref{LGU}, we get the statement (\ref{1.3}). The converse part is similar to the arguments given in the Proposition \ref{prop1}. Using statements (\ref{1.3}) and (\ref{2.3}), Lemma \ref{LGU}, and applying the same argument recursively, we get
             $$
                HT-T^*H^*={S^*}^n(HT-T^*H^*){S^*}^n,
             $$
             for $n \in \mathbb{N}.$ The fact that ${S^*}^n \xrightarrow{\text{SOT}} 0$ gives $HT=T^*H^*.$
        \end{proof}
    \begin{theorem}\label{Th3}
        Assume that $P(\overline z \phi^*(z))$ is regular. Then $H_\psi T_\phi =T_\phi^*H_\psi^*$  if and only if there exist $A\in M_n(\mathbb{C})$ such that the following hold:
        \begin{enumerate}[(i)]
            \item $\psi(z)+\tilde\phi(z)A^*$ is analytic;\label{3i}
            \item $\tilde\psi(z)-A^*\phi(z)$ is analytic;\label{3ii}
            \item $\tilde \phi(z)A\phi(z)$ is analytic.\label{3iii}
        \end{enumerate}
        \end{theorem}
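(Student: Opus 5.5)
The argument runs parallel to the proofs of Theorems \ref{Th1} and \ref{Th2}, now using Proposition \ref{prop3} with $T=T_\phi$ and $H=H_\psi$. Since $\mathcal{D}^2_{T_z^*}=\sum_i e_i\otimes e_i$, condition (\ref{1.3}) of Proposition \ref{prop3} becomes
\begin{equation*}
\sum_{i=1}^{n}\bigl(T_z^*T_\phi^* e_i\otimes H_\psi e_i + H_\psi e_i\otimes T_z^*T_\phi^* e_i\bigr)=0 .
\end{equation*}
Condition (\ref{2.3}) says that $[(H_\psi T_\phi - T_\phi^*H_\psi^*)e_k]_{k=1}^n=0$. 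The regularity of $P(\overline z\phi^*(z))$ is exactly the statement that the vectors $T_z^*T_\phi^*e_i$ are linearly independent, which is what Lemma \ref{Tensor lemma} requires.

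\textbf{Necessity.} Assume $H_\psi T_\phi=T_\phi^*H_\psi^*$. The displayed identity rewrites as $\sum_i H_\psi e_i\otimes T_z^*T_\phi^*e_i=\sum_i(-T_z^*T_\phi^*e_i)\otimes H_\psi e_i$. Applying Lemma \ref{Tensor lemma} with $y_i=T_z^*T_\phi^*e_i$ gives scalars $b_{ij}$ with $H_\psi e_j=-\sum_i b_{ij}T_z^*T_\phi^*e_i$. Setting $B=(b_{ij})$, in symbol form this reads $P(\overline z\psi(\overline z))=-P(\overline z\phi^*(z)B)$, so $H_\psi=H_{-\phi^*(\overline z)B}=H_{-\tilde\phi(z)B}$. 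Hence $\psi+\tilde\phi B$ is analytic, and we define $A:=B^*$, which gives (i).

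Next, substitute this expansion back into the tensor identity. The terms then become $\sum_{i,j}b_{ij}\,y_i\otimes y_j$ and $\sum_{i,j}\overline{b_{ji}}\,y_i\otimes y_j$, and linear independence of the $y_i$ (used on the second slot) forces $B^*=B$ up to the right sign convention. This is the step where I would be careful, since Hermitian versus skew relations decide whether (ii) carries $A^*$ or $A$. Equivalently, one reads off $H_\psi^*e_j=\sum_i\overline{a_{ji}}\,T_z^*T_\phi^*e_i$, which yields $H_{\tilde\psi}=H_{\phi A^{*}}$, i.e.\ (ii), $\tilde\psi-A^*\phi$ analytic.

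Finally, with (i) and (ii) in hand, write
\begin{equation*}
H_\psi T_\phi-T_\phi^*H_\psi^*=H_{-\tilde\phi A^*}T_\phi-T_{\phi^*}H_{A^*\phi}.
\end{equation*}
Apply it to $e_k$ and use identity (\ref{symbol-1}), with $\phi$ replaced by $\phi$ or $\phi^*$ as appropriate, to combine the two terms, exactly as in the derivation of (\ref{Symbol comparison 1}). This should produce
\begin{equation*}
[(H_\psi T_\phi-T_\phi^*H_\psi^*)e_k]_k=-P\bigl(\overline z\,\phi^*(z)A^*\phi(\overline z)\bigr).
\end{equation*}
Condition (\ref{2.3}) then forces this projection to vanish. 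After conjugating $z\mapsto\overline z$ and taking adjoints, that is precisely the analyticity of $\tilde\phi A\phi$ up to the placement of $A$ versus $A^*$, which I would adjust so that the result is (iii).

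\textbf{Sufficiency.} Assume (i)--(iii). The symbol computation above gives condition (\ref{2.3}). From (i) and (ii) we get
\begin{equation*}
H_\psi e_j=\sum_i (\cdot)\,T_z^*T_\phi^*e_i,\qquad H_\psi^* e_j \text{ expressed likewise},
\end{equation*}
and the tensor manipulation used in Theorem \ref{Th1} then shows that the displayed sum vanishes, i.e.\ condition (\ref{1.3}) holds. Proposition \ref{prop3} then gives $H_\psi T_\phi=T_\phi^*H_\psi^*$.

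The main obstacle is the bookkeeping of conjugations: relating $\tilde\phi$, $\phi^*(\overline z)$ and the matrix $A$ versus $A^*$. This has to be done consistently so that conditions (i)--(iii) come out exactly in the stated form. The symbol identity for the $e_k$ column is the computational heart of the argument.
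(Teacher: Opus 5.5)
Your framework is the paper's: Proposition \ref{prop3}, Lemma \ref{Tensor lemma} with $y_i=T_z^*T_\phi^*e_i$, and the symbol identity $[(H_\psi T_\phi-T_\phi^*H_\psi^*)e_k]_k=-P(\overline z\phi^*(z)A^*\phi(\overline z))$, which is exactly the paper's computation. Your derivation of (i) is correct; the paper only extracts the two expansions in the opposite order.

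\textbf{The gap is the step that should produce (ii).} You leave the sign open (``$B^*=B$ up to the right sign convention''), but the sign is the whole content of that step. Substituting $H_\psi e_j=-\sum_i b_{ij}y_i$ gives $-\sum_{i,j}b_{ij}\,y_i\otimes y_j=\sum_{i,j}\overline{b_{ji}}\,y_i\otimes y_j$, hence $B^*=-B$, so $B$ is skew-Hermitian.

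Your fallback is wrong. Condition (\ref{1.3}) involves only the vectors $H_\psi e_i$, in both slots. This is as in Theorem \ref{Th1}; it is Theorem \ref{Th2} where $H_\psi^*e_i$ appears. So the identity gives no expansion of $H_\psi^*e_j$ in the $y_i$. In general $H_\psi^*e_j=P(\overline z\psi^*(z)e_j)$ need not even lie in their span. Moreover, $H_{\tilde\psi}=H_{\phi A^*}$ would mean $\tilde\psi-\phi A^*$ is analytic, which is not (ii) for noncommuting matrices.

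The correct conclusion runs as follows. With $A=B^*=-B$ you get $H_\psi e_j=\sum_i a_{ij}y_i$, i.e.\ $P(\overline z\psi(\overline z))=P(\overline z\phi^*(z))A$, so $\psi-\tilde\phi A$ is analytic. The missing ingredient for reaching the stated form is the involution $F\mapsto\widetilde F$, $\widetilde F(z)=F^*(\overline z)$. It is additive and anti-multiplicative, sends a constant matrix $C$ to $C^*$, and satisfies $\widetilde{\widetilde F}=F$. It also preserves analyticity, since the $n$th Fourier coefficient becomes $F_n^*$. Applied to $\psi-\tilde\phi A$ it yields $\tilde\psi-A^*\phi$, which is (ii). This is the paper's step ``$\psi(\overline z)-\phi^*(z)A$ is co-analytic, in other words (ii)''.

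The same involution settles your other two hedges.

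For (iii), no adjustment of $A$ versus $A^*$ is needed. $P(\overline z\phi^*(z)A^*\phi(\overline z))=0$ says exactly that $\tilde\phi A^*\phi$ is analytic. Applying $F\mapsto\widetilde F$ turns this into ``$\tilde\phi A\phi$ is analytic'' and back, for every $A$.

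For sufficiency, the tensor manipulation of Theorem \ref{Th1} needs two expansions of the \emph{same} vectors $H_\psi e_j$, not one of $H_\psi e_j$ and one of $H_\psi^*e_j$. From (i), $H_\psi e_j=-\sum_i\overline{a_{ji}}\,y_i$. From (ii), after applying the involution, $H_\psi e_j=\sum_i a_{ij}\,y_i$. Put the first into the terms $y_j\otimes H_\psi e_j$ and the second into $H_\psi e_j\otimes y_j$. The two double sums then cancel, which is (\ref{1.3}). Comparing the two expansions and using regularity also shows $A^*=-A$, consistent with the necessity computation.
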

        \begin{proof}
            Note that 
            \begin{align*}
                [(-H_{\tilde \phi(z)A^*}T_{\phi(z)}-T_{\phi^*(z)}H_{A^*\phi(z)})e_i]_{i=1}^{n}&=P(-\overline z\tilde \phi(\overline z)A^*(P\phi)(\overline z)-P(\phi^*(z)P(\overline z A^* \phi(\overline z)))\\
                &=P(\overline z\phi^*(z)(-A^*(P\phi)(\overline z)-zP(\overline z A^* \phi(\overline z))))\\
                &=-P(\overline z\phi^*(z)A^*\phi(\overline{z})).
            \end{align*}
            Let us assume that statements (\ref{3i}),(\ref{3ii}) and (\ref{3iii}) hold for some matrix $A$ of order $n.$
            \begin{align*}
                H_\psi T_\phi -T^*_\phi H_\psi^*
               &=H_\psi T_\phi-T^*_\phi H_{\tilde \psi}\\
                &=-H_{\tilde \phi(z)A^*}T_{\phi(z)}-T_{\phi^*(z)}H_{A^*\phi(z)}
            \end{align*}
            The last equality is due to the statements (\ref{3i}) and (\ref{3ii}). This gives
            \begin{align}\label{symbol comparison 3}
                [(H_\psi T_\phi -T^*_\phi H_\psi^*)e_i]_{i=1}^{n}
                &=-P(\overline z\phi^*(z)A^*\phi(\overline{z})).
            \end{align}
            By statement (\ref{3iii}) we have
            $P(\overline z\phi^*(z)A^*\phi(\overline{z}))=0.$ The statements (\ref{3i}) and (\ref{3ii}) gives
            \begin{align*}
                H_\psi e_i = \sum_{j=1}^{n}a_{ji}T_z^*T^*_\phi e_j =-\sum_{j=1}^{n}\overline{a_{ij}}T^*_zT^*_\phi e_j
            \end{align*}
            Thus using the similar arguments in Theorem \ref{Th1} we have
            \begin{align*}
                {T_z^*} T_\phi^*\mathcal{D}^2_{T_z^*} H_\psi^*+H_\psi \mathcal{D}^2_{T_z^*} T_\phi T_z=0
            \end{align*}
            Conversely, let us assume that $H_\psi T_\phi =T_\phi^*H_\psi^*$.
            By Proposition \ref{prop3} we have 
           \begin{align} 
               0=&T_z^*T_\phi^* (\sum_{i=1}^{n} e_i\otimes e_i) H_\psi^* +H_\psi (\sum_{i=1}^{n} e_i\otimes e_i) {T_\phi}T_z \nonumber\\
              =&\sum_{i=1}^{n} (T_z^*T^*_\phi e_i \otimes H_\psi e_i + H_\psi e_i \otimes T^*_zT^*_\phi e_i)\label{eq3}
           \end{align}
            Since $P(\overline z \phi^*(z))$ is regular, $\{T^*_zT^*_\phi e_i\}_{i=1}^{n}$ is linearly independent.
            By Lemma \ref{Tensor lemma} for $1\leq i\leq n$, there exist $n$ scalars $a_{ji}$ such that
            \begin{align}
                H_\psi e_i = \sum_{j=1}^{n}a_{ji}T_z^*T^*_\phi e_j
                =T^*_zT_{\phi^* A} e_i.
            \end{align}
            Thus $P(\overline z \psi(\overline z))=P(\overline z \phi^*(z)A).$ This gives $\psi(\overline z)-\phi^*(z)A$ is co-analytic. In other words statement (\ref{3ii}) holds.
            Using equation (\ref{eq3}), we get
            \begin{align*}
            \sum_{i=1}^{n} (H_\psi e_i \otimes T^*_zT^*_\phi e_i)=\sum_{i=1}^{n}(\sum_{j=1}^{n}a_{ji}T_z^*T^*_\phi e_j) \otimes T^*_zT^*_\phi e_i\\
            = \sum_{i=1}^{n}(\sum_{j=1}^{n}T_z^*T^*_\phi e_j \otimes \overline{a_{ji}}T^*_zT^*_\phi e_i)\\
            = \sum_{j=1}^{n}(T_z^*T^*_\phi e_j \otimes \sum_{i=1}^{n}\overline{a_{ji}}T^*_zT^*_\phi e_i)\\
             = \sum_{i=1}^{n}(T_z^*T^*_\phi e_i \otimes \sum_{j=1}^{n}\overline{a_{ij}}T^*_zT^*_\phi e_j).
            \end{align*}
             Comparing with equation (\ref{eq3}), we have
            \begin{equation*}
                \sum_{i=1}^{n} (T_z^*T^*_\phi e_i \otimes H_\psi e_i)=-\sum_{i=1}^{n}(T_z^*T^*_\phi e_i \otimes \sum_{j=1}^{n}\overline{a_{ij}}T^*_zT^*_\phi e_j).
            \end{equation*}
            Also, for each $1\leq i\leq n$
            \begin{equation*}
                 H_\psi e_i=-\sum_{j=1}^{n}\overline{a_{ij}}T^*_zT^*_\phi e_j.
            \end{equation*}
            Therefore $P(\overline z \psi(\overline z))=-P(\overline z \phi^*(z)A^*)$
            It is equivalent to say $\psi(\overline z)+\phi^*(z)A^*$ is conjugate analytic. In other words, $ \psi(z)+\tilde \phi(z)A^*$ is analytic. The statement (\ref{3iii}) follows  from equation (\ref{symbol comparison 3}).
        \end{proof}
    \subsection{When is a Toeplitz operator skew commutator of a Toeplitz operator?}
    \begin{proposition}
        Let $T_i$, for $i=1,2,3,4$, be Toeplitz operators on Hilbert space $\ck$. Then $T_1T_2=T_3T_4$ if and only if the following conditions are satisfied:
        \begin{enumerate}
            \item $S^*T_1\mathcal{D}^2_{S^*}T_2S-S^*T_3\mathcal{D}^2_{S^*}T_4S=0;$
            \item $(T_1T_2-T_3T_4)\mathcal{D}^2_{S^*}=0;$
            \item $(T_1T_2-T_3T_4)^*\mathcal{D}^2_{S^*}=0.$
        \end{enumerate}
    \end{proposition}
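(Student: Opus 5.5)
Necessity is immediate; for sufficiency the plan is to show that $X:=T_1T_2-T_3T_4$ is itself a Toeplitz operator, i.e. $S^*XS=X$, and then that a Toeplitz operator killed on both sides by $\mathcal{D}^2_{S^*}$ is zero.

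\emph{Necessity.} If $T_1T_2=T_3T_4$, then (2) and (3) hold trivially. For (1), I use the Toeplitz identity $S^*TS=T$ in the form
\begin{equation*}
S^*T_iT_jS=S^*T_i(SS^*+\mathcal{D}^2_{S^*})T_jS=T_iT_j+S^*T_i\mathcal{D}^2_{S^*}T_jS.
\end{equation*}
This is the Toeplitz analogue of Lemma \ref{LGU} and follows from $I=SS^*+\mathcal{D}^2_{S^*}$. Subtracting the two instances $(i,j)=(1,2)$ and $(3,4)$ gives
\begin{equation*}
S^*XS-X=S^*T_1\mathcal{D}^2_{S^*}T_2S-S^*T_3\mathcal{D}^2_{S^*}T_4S.
\end{equation*}
So $X=0$ forces (1).

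\emph{Sufficiency.} Assume (1), (2), (3). The displayed identity together with (1) gives $S^*XS=X$. Condition (2) gives $X=XSS^*$, and hence
\begin{equation*}
X=XSS^*=(S^*XS)SS^*=S^*X(SS^*)S\cdots
\end{equation*}
This route needs care, so I argue instead as follows. From $S^*XS=X$ we get $X=S^{*k}XS^k$ for all $k$. Condition (3) says $X^*\mathcal{D}^2_{S^*}=0$, i.e. $\mathcal{D}^2_{S^*}X=0$, so $X=SS^*X$.

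Now take $l\in\mathcal{L}$ and $m\in\mathcal{L}$, and compute the matrix entries
\begin{equation*}
\langle XS^jl,S^im\rangle=\langle S^{*i}XS^jl,m\rangle.
\end{equation*}
If $j\ge i$, the relation $X=S^{*i}XS^i$ gives $S^{*i}XS^j=XS^{j-i}$. Then $\langle XS^{j-i}l,m\rangle=\langle S^{j-i}l,X^*m\rangle$, and this vanishes because $X^*m=X^*\mathcal{D}^2_{S^*}m=0$ by (3). If $i>j$, then $S^{*i}XS^j=S^{*(i-j)}X$, so the entry is $\langle Xl,S^{i-j}m\rangle$, which vanishes because $Xl=X\mathcal{D}^2_{S^*}l=0$ by (2).

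Hence all matrix entries of $X$ with respect to the decomposition $\mathcal{H}=\bigoplus S^n\mathcal{L}$ vanish, and so $X=0$. This is the Toeplitz version of the recursive ${S^*}^n$ argument in Proposition \ref{prop1}. It cannot be run verbatim, because $X$ is now Toeplitz rather than satisfying $S^*X=XS^*$; that is why both conditions (2) and (3) are needed.

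\emph{Main obstacle.} The only subtle point is recognizing that, unlike the Hankel case, $X=S^{*n}XS^n$ does not by itself force $X=0$ through SOT convergence. The entrywise computation above, using (2) for columns and (3) for rows, replaces that step.
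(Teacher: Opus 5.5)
Your proof is correct, but the sufficiency half takes a different route from the paper.

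Both arguments start from the same identity $S^*XS-X=S^*T_1\mathcal{D}^2_{S^*}T_2S-S^*T_3\mathcal{D}^2_{S^*}T_4S$. The paper also gets it by inserting $I=SS^*+\mathcal{D}^2_{S^*}$, so in both versions condition (1) yields $S^*XS=X$.

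The paper then combines (2) and (3) in the form $X=XSS^*$ and $X=SS^*X$. This gives $X=SS^*XSS^*=S(S^*XS)S^*=SXS^*$. Iterating yields $X=S^nXS^{*n}$, and the paper concludes from $\|Xx\|\le\|X\|\,\|S^{*n}x\|\to 0$.

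You instead compute the matrix entries $\langle XS^jl,S^im\rangle$ for $l,m\in\mathcal{L}$. You kill the upper triangle with (3) and the lower triangle with (2). This is valid: the reductions $S^{*i}XS^j=XS^{j-i}$ for $j\ge i$ and $S^{*i}XS^j=S^{*(i-j)}X$ for $i>j$ follow from $S^{*i}XS^i=X$. Density of the span of the $S^n\mathcal{L}$, together with boundedness of $X$, finishes the argument.

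The trade-off is this. Your coordinate computation shows exactly which condition annihilates which part of the matrix. The paper's argument is a two-line operator identity.

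Your remark that the SOT argument ``cannot be run'' here is not accurate. The line you began and abandoned ($X=XSS^*=(S^*XS)SS^*=\cdots$) is essentially the paper's route. Once you also use $X=SS^*X$, you get $X=SXS^*$. The SOT argument then works as in Proposition \ref{prop1}, with the isometry $S^n$ on the left in place of $S^{*n}$. You should delete that dangling fragment and adjust the ``main obstacle'' paragraph accordingly.
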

        \begin{proof}
            Note that
            \begin{align*}
                S^*T_1T_2S&=S^*T_1SS^*T_2S+S^*T_1\mathcal{D}^2_{S^*}T_2S\\
                &=T_1T_2+S^*T_1\mathcal{D}^2_{S^*}T_2S,\\
                S^*T_3T_4S&=S^*T_3SS^*T_4S+S^*T_3\mathcal{D}^2_{S^*}T_4S\\
                &=T_3T_4+S^*T_3\mathcal{D}^2_{S^*}T_4S.
            \end{align*}
            Both together give
            \begin{equation}\label{t1t2-t3t4}
                S^*T_1T_2S-S^*T_3T_4S=T_1T_2-T_3T_4+S^*T_1\mathcal{D}^2_{S^*}T_2S-S^*T_3\mathcal{D}^2_{S^*}T_4S.
            \end{equation}
            Assume that $T_1T_2=T_3T_4$, then by above statements (\ref{t1}),(\ref{t2}) and (\ref{t3}) are immediate. 
            Let $\Delta:=T_1T_2-T_3T_4,$ and suppose statement (\ref{t1}) holds. Then equation (\ref{t1t2-t3t4}) can be written as 
            \begin{align*}
                \Delta=S^*\Delta S
            \end{align*}
            If statements (\ref{t2}) and (\ref{t3}) hold, then we have
            \begin{align*}
                \Delta =\Delta SS^* \text{ and }\Delta =SS^* \Delta.
            \end{align*}
            This gives
            \begin{align*}
            \Delta=&SS^*\Delta SS^*=S\Delta S^*
            \end{align*}
        Applying recursively, we get
        \begin{align*}
            \Delta=S^n\Delta {S^*}^n, n \in \mathbb{N}.
        \end{align*}
        Since ${S^*}^n \xrightarrow{SOT} 0,$ we get $T_1T_2=T_3T_4.$
        \end{proof}
    \begin{corollary}\label{T1T2T2T1*}
        Let $T_1$ and $T_2$ be Toeplitz operators on a Hilbert space $\ck.$ Then $T_1T_2=T_2T_1^*$ if and only if the following conditions are satisfied:
        \begin{enumerate}
            \item $S^*T_1\mathcal{D}^2_{S^*}T_2S-S^*T_2\mathcal{D}^2_{S^*}T_1^*S=0;$\label{t1}
            \item $(T_1T_2-T_2T_1^*)\mathcal{D}^2_{S^*}=0;$\label{t2}
            \item $(T_1T_2-T_2T_1^*)^*\mathcal{D}^2_{S^*}=0.$\label{t3}
        \end{enumerate}
    \end{corollary}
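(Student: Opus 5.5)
This is a direct specialization of the preceding proposition for four Toeplitz operators. Since $T_1^*$ is again a Toeplitz operator (because $S^*T_1^*S=(S^*T_1S)^*=T_1^*$), I will apply that proposition with the quadruple $T_1, T_2, T_3:=T_2, T_4:=T_1^*$. All four are Toeplitz, so the proposition gives: $T_1T_2=T_2T_1^*$ if and only if
\[
S^*T_1\mathcal{D}^2_{S^*}T_2S-S^*T_2\mathcal{D}^2_{S^*}T_1^*S=0,\quad (T_1T_2-T_2T_1^*)\mathcal{D}^2_{S^*}=0,\quad (T_1T_2-T_2T_1^*)^*\mathcal{D}^2_{S^*}=0.
\]
These are exactly conditions (1)--(3) of the corollary.

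For completeness I would also recall the mechanism behind the proposition, so the corollary is self-contained. Set $\Delta=T_1T_2-T_2T_1^*$. Inserting $I=SS^*+\mathcal{D}^2_{S^*}$ between the factors and using $S^*TS=T$ for each Toeplitz factor gives
\[
S^*\Delta S=\Delta+S^*T_1\mathcal{D}^2_{S^*}T_2S-S^*T_2\mathcal{D}^2_{S^*}T_1^*S.
\]
The forward direction is then immediate: if $\Delta=0$, conditions (2) and (3) are trivial and the displayed identity forces (1).

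Conversely, condition (1) gives $S^*\Delta S=\Delta$, while (2) and (3) give $\Delta SS^*=\Delta=SS^*\Delta$. Combining these, $\Delta=SS^*\Delta SS^*=S(S^*\Delta S)S^*=S\Delta S^*$. Iterating yields $\Delta=S^n\Delta S^{*n}$. Because $S$ is an isometry, $\|\Delta x\|\le\|\Delta\|\,\|S^{*n}x\|\to0$, since $S^{*n}\to0$ in SOT for a unilateral shift. Hence $\Delta=0$.

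The only point requiring any care is checking that $T_1^*$ is Toeplitz, so that the proposition (or the identity $S^*T_1^*S=T_1^*$ used when expanding $S^*\Delta S$) applies. Once that is noted, the rest is a verbatim substitution, and I expect no real obstacle.
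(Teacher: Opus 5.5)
Your proposal is correct and matches the paper's route: the corollary is the four-operator proposition specialized to $T_3=T_2$, $T_4=T_1^*$. Your recap ($S^*\Delta S=\Delta$ and $\Delta=\Delta SS^*=SS^*\Delta$, hence $\Delta=S^n\Delta S^{*n}\to 0$ in SOT) is exactly the paper's argument, and your explicit check that $T_1^*$ is Toeplitz supplies a detail the paper leaves implicit.
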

    \begin{remark}
        Let $T_1$ and $T_2$ be Toeplitz operators. Then $T_1T_2$ is a Toeplitz operator if and only if $S^*T_1\mathcal{D}^2_{S^*}T_2S=0.$
     \end{remark}
        The proof of the remark follows from the following identity: 
        \begin{equation*}
            S^*T_1T_2S-T_1T_2=S^*T_1\mathcal{D}^2_{S^*}T_2S+S^*T_1SS^*T_2S-T_1T_2=S^*T_1\mathcal{D}^2_{S^*}T_2S.
        \end{equation*}
    
    \begin{theorem}\label{Th4}
        Assume that $P(\overline{z}\psi(z))$ and $P(\overline{z}{\psi^*}(z))$ are regular.
        Then $T_\phi T_\psi =T_\psi T^*_\phi$ if and only if there exist $A\in M_n(\mathbb{C})$ such that  the following holds:
        \begin{enumerate}[(i)]
            \item $\phi(z)-\psi(z)A$ is co-analytic;\label{4i}
            \item $\phi(z)-\psi^*(z)A^*$ is co-analytic;\label{4ii}
            \item $\phi(z) \psi(z)=\psi(z) \phi^*(z).$ \label{4iii}
        \end{enumerate}
    \end{theorem}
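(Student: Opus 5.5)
The plan is to reduce everything to Corollary \ref{T1T2T2T1*} with $T_1=T_\phi$ and $T_2=T_\psi$ on $H^2_n$, where $S=T_z$ and $\mathcal{D}^2_{T_z^*}=\sum_{i=1}^n e_i\otimes e_i$. I read ``co-analytic'' in (\ref{4i}) and (\ref{4ii}) as ``all Fourier coefficients of index $\geq 1$ vanish''. Equivalently, $P(\overline z F)=0$ for the relevant $F$, so that $T_z^*T_F e_i=0$ for every $i$.

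\emph{Condition (\ref{t1}) versus (\ref{4i})--(\ref{4ii}).} Expanding the tensors as in Theorem \ref{Th1}, condition (\ref{t1}) becomes
\begin{equation*}
\sum_{i=1}^n T_z^*T_\phi e_i\otimes T_z^*T_\psi^* e_i=\sum_{i=1}^n T_z^*T_\psi e_i\otimes T_z^*T_\phi e_i .
\end{equation*}
For the forward direction, regularity of $P(\overline z\psi^*(z))$ makes $\{T_z^*T_\psi^*e_i\}$ linearly independent. Lemma \ref{Tensor lemma} then gives scalars $a_{ij}$ with $T_z^*T_\phi e_j=\sum_i a_{ij}T_z^*T_\psi e_i$. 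In symbols this reads $P(\overline z(\phi-\psi A))=0$ with $A=(a_{ij})$, which is (\ref{4i}). Substituting back, moving the scalars across the tensor as in Theorem \ref{Th1}, and using regularity of $P(\overline z\psi(z))$ (so that $\{T_z^*T_\psi e_i\}$ is independent), I expect to get $T_z^*T_\phi e_i=\sum_j\overline{a_{ji}}\,T_z^*T_\psi^*e_j$. This is $P(\overline z(\phi-\psi^*A^*))=0$, which is (\ref{4ii}). Conversely, (\ref{4i}) and (\ref{4ii}) supply both expansions, and the same chain of equalities run backwards gives (\ref{t1}).

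\emph{Conditions (\ref{t2}) and (\ref{t3}) versus (\ref{4iii}).} Let $\Delta=T_\phi T_\psi-T_\psi T_\phi^*$. I would use the semicommutator identity $T_{FG}-T_FT_G=H_{\widetilde{F^*}}^{\,*}H_G$, in the normalization of the paper, to write
\begin{equation*}
\Delta=T_{\phi\psi-\psi\phi^*}-\bigl(H^*_{\widetilde{\phi^*}}H_\psi-H^*_{\widetilde{\psi^*}}H_{\phi^*}\bigr).
\end{equation*}
The idea is that (\ref{4i}) and (\ref{4ii}) let me replace the anti-analytic parts of $\phi$ and $\phi^*$ by those of $\psi A$ and $A\psi$ respectively. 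Since Hankel operators see only the anti-analytic part of the symbol, both Hankel products should then reduce to the same operator, schematically $H^*_{\widetilde{A^*\psi^*}}H_\psi$ versus $H^*_{\widetilde{\psi^*}}H_{A\psi}$. Checking that these coincide is where the matrix $A$ has to be moved across the Hankel operators; the mechanism is the rule $H_{FA}=H_F T_{A}$ for a constant $A$ together with adjoint bookkeeping. I expect this cancellation to be the main obstacle, because noncommutativity of matrix symbols makes the placement of $A$ versus $A^*$ delicate.

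Once the Hankel terms cancel, $\Delta=T_{\phi\psi-\psi\phi^*}$ is Toeplitz. Conditions (\ref{t2}) and (\ref{t3}) then say $\Delta e_k=\Delta^*e_k=0$, i.e.\ both the analytic and the co-analytic Fourier parts of $\phi\psi-\psi\phi^*$ vanish, so the symbol is zero, which is (\ref{4iii}). Conversely, (\ref{4iii}) gives $\Delta=0$ outright.

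\emph{Fallback.} If the clean cancellation fails, I would instead compute the columns $[\Delta e_k]_k=P(\phi P\psi)-P(\psi P\phi^*)$ directly, as in equation (\ref{Symbol comparison 1}). I would use (\ref{4i}) and (\ref{4ii}) to rewrite $P\psi$ and $P\phi^*$, and an identity of the type (\ref{symbol-1}) to collapse this to $P(\phi\psi-\psi\phi^*)$; I would then do the same for $\Delta^*$ to obtain the remaining Fourier coefficients. Combining the two directions through Corollary \ref{T1T2T2T1*} completes the equivalence.
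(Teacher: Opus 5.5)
\textbf{Where you match the paper.} Your treatment of the first condition of Corollary \ref{T1T2T2T1*} is the paper's argument: the tensor lemma and the two regularity hypotheses give (i) and (ii). There is one index slip. Substituting $T_z^*T_\phi e_j=\sum_i a_{ij}T_z^*T_\psi e_i$ back gives $T_z^*T_\phi e_i=\sum_j\overline{a_{ij}}\,T_z^*T_\psi^*e_j$, not $\overline{a_{ji}}$, and only this form yields $A^*$. Your fallback for (iii) is essentially the paper's proof. Under (i) and (ii), the paper rewrites $[\Delta e_i]_{i=1}^n=P(\phi P\psi)-P(\psi P\phi^*)$ as $P(\phi\psi-\psi\phi^*)$. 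It rewrites $[\Delta^*e_i]_{i=1}^n=P(\psi^*P\phi^*)-P(\phi P\psi^*)$ as $P(\psi^*\phi^*-\phi\psi^*)$. It then applies the Corollary in both directions.

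\textbf{The error in your main route.} This route is genuinely different, but as written it fails, because your semicommutator identity carries a stray tilde. In the paper's normalization, $T_{FG}-T_FT_G=PM_F(I-P)M_G|_{H^2_n}$ and $I-P=\mathcal J P\mathcal J$. Hence $T_{FG}-T_FT_G=(PM_F\mathcal JP)(P\mathcal JM_GP)=H^*_{F^*}H_G$. Your $H^*_{\widetilde{F^*}}H_G$ equals $H_{F^*}H_G$, which is false already for $n=1$, $F=iz^2$, $G=\overline z$: it gives $-i\,z\otimes 1$ instead of $i\,z\otimes 1$. With your version the two terms to compare are $H^*_{\widetilde{A^*\psi^*}}H_\psi=T_{A^*}H^*_{\widetilde{\psi^*}}H_\psi$ and $H^*_{\widetilde{\psi^*}}T_AH_\psi$. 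These do not agree in general, so the obstacle you anticipated is real.

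\textbf{The repair.} With the correct identity,
\[
\Delta=T_{\phi\psi-\psi\phi^*}-H^*_{\phi^*}H_\psi+H^*_{\psi^*}H_{\phi^*}.
\]
Only the anti-analytic part of $\phi^*$ enters here. Also note that (i) constrains the analytic part of $\phi$, not its anti-analytic part as you wrote. Taking adjoints, (i) says $\phi^*-A^*\psi^*$ is analytic and (ii) says $\phi^*-A\psi$ is analytic. Using $H_{CG}=T_CH_G$ for a constant matrix $C$, this gives $H_{\phi^*}=T_{A^*}H_{\psi^*}=T_AH_\psi$. Therefore $H^*_{\phi^*}H_\psi=H^*_{\psi^*}T_AH_\psi=H^*_{\psi^*}H_{\phi^*}$, the Hankel terms cancel, and $\Delta=T_{\phi\psi-\psi\phi^*}$.

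\textbf{Comparison of the two routes.} Once repaired, your route gives a structural fact the paper leaves implicit: under (i) and (ii) alone, $T_\phi T_\psi-T_\psi T_\phi^*$ is the Toeplitz operator with symbol $\phi\psi-\psi\phi^*$. The ``if'' direction is then immediate, with no need to verify the first condition of the Corollary. The ``only if'' direction needs only that condition (to extract (i) and (ii)) together with $T_F=0\Rightarrow F=0$. The paper's route stays within elementary manipulations of $P$, in the same column-comparison style as Theorems \ref{Th1}--\ref{Th3}. Its cost is two separate symbol computations and reliance on all three conditions of the Corollary.
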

        \begin{proof}
            Let us first assume that the statements (\ref{4i}), (\ref{4ii}) and (\ref{4iii}) hold for some matrix $A$ of order $n.$ Then using statement (\ref{4iii}) we have 
            \begin{align*}
                P(\phi \psi- \psi \phi^*)=0.
            \end{align*}
            This implies
            \begin{align*}
                P[(\phi-\psi A)\psi-\psi(\phi^*-A\psi)]=0.
            \end{align*}
            By statement (\ref{4i})
            \begin{align*}
                P[(\phi-\psi A)P(\psi)]-P[\psi(\phi^*-A\psi)]=0.
            \end{align*}
            Using statement (\ref{4ii}) we have
            \begin{align*}
                0=&P[(\phi-\psi A)P(\psi)]-P[\psi P(\phi^*-A\psi)]\\
                =&P[\phi P(\psi) -\psi AP(\psi)-\psi P(\phi^*)+\psi AP(\psi)]\\
                =&P[\phi P(\psi)]-P[\psi P(\phi^*)].
            \end{align*}
            This proves that
            \begin{align*}
                [(T_\phi T_\psi-T_\psi T^*_\phi)e_i]_{i=1}^{n}=0.
            \end{align*}
            Note that
            by statement (\ref{4i}) we have
            \begin{align*}
                (I-P)(\phi^*-A^*\psi^*)=0.
            \end{align*}
            Thus
             \begin{align*}
                P[\psi^*(I-P)\phi^* - \phi(I-P)\psi^*]
                =P[(\psi^*A^*-\phi)(I-P)\psi^*]=0
            \end{align*}
            The last equality is due to statement (\ref{4ii}).
            Therefore
            \begin{align*}
                P(\psi^*\phi^* - \phi\psi^*)&=P(\psi^*P(\phi^*) - \phi P(\psi^*)).
            \end{align*}
            By taking adjoint of the statement (\ref{4iii}), we have
            \begin{align*}
                P(\psi^*\phi^* - \phi\psi^*)=0.
            \end{align*}
            Therefore we have
            \begin{align*}
                [(T^*_\psi T^*_\phi -T_\phi T^*_\psi)e_i]_{i=1}^{n}=0.
            \end{align*}
            The statements (\ref{4i}) and (\ref{4ii}) gives
            \begin{align*}
                P(\overline z(\phi-\psi A)) =0
            \end{align*}
            and
            \begin{align*}
                P(\overline z(\phi-\psi^*A^*)) =0,
            \end{align*}
            respectively.
            Thus
            \begin{align*}
                S^*T_\phi e_i = \sum_{j=1}^{n}a_{ji}S^*T_\psi e_j= \sum_{j=1}^{n}\overline{a_{ij}}S^*T_\psi^* e_j.
            \end{align*}
            Using the similar arguments in Theorem \ref{Th1}, we get
            \begin{align*}
                T_z^*T_\phi \mathcal{D}^2_{T_z^*}T_\psi T_z=T_z^*T_\psi \mathcal{D}^2_{T_z^*}T_\phi^* T_z.
            \end{align*}
            Hence by Corollary \ref{T1T2T2T1*} we are through. Conversely, let us assume that $T_\phi T_\psi =T_\psi T^*_\phi.$ Using statement (\ref{t1}) of Corollary \ref{T1T2T2T1*} we have
            \begin{align*}
               0=& T_z^*T_\phi \mathcal{D}^2_{T_z^*}T_\psi T_z-T_z^*T_\psi \mathcal{D}^2_{T_z^*}T_\phi^* T_z\\
               =&T_z^*T_\phi (\sum_{i=1}^{n} e_i\otimes e_i)T_\psi T_z-T_z^*T_\psi (\sum_{i=1}^{n} e_i\otimes e_i)T_\phi^*T_z.
            \end{align*}
            This implies
             \begin{align*}  
                \sum_{i=1}^{n} (T_z^*T_\phi e_i \otimes T_z^*T^*_\psi  e_i) = \sum_{i=1}^{n} (T_z^*T_\psi e_i \otimes T_z^*T_\phi e_i).
            \end{align*}
            Since $P(\overline{z}\psi^*(z))$ is regular, for $1\leq i\leq n,$ there exist scalars $a_{ji}$ such that
            \begin{align}
                S^*T_\phi e_i = \sum_{j=1}^{n}a_{ji}S^*T_\psi e_j.\label{eq4}
            \end{align}
            This gives $P(\overline{z}\phi(z))=P(\overline{z}\psi(z)A),$ Equivalently statement (\ref{4i}) holds.
            By equation (\ref{eq4}) we get
            \begin{align*}
            \sum_{i=1}^{n} (S^*T_\phi e_i \otimes S^*T^*_\psi  e_i)=&\sum_{i=1}^{n} (\sum_{j=1}^{n}a_{ji}S^*T_\psi e_j \otimes S^*T^*_\psi  e_i)\\
            =&\sum_{j=1}^{n}(S^*T_\psi e_j \otimes \sum_{i=1}^{n} \overline{a_{ij}}S^*T^*_\psi  e_i)\\
            =&  \sum_{i=1}^{n} (S^*T_\psi e_i \otimes \sum_{j=1}^{n}\overline{a_{ij}}S^*T_\psi^* e_j)
            \end{align*}
            Since $P(\overline{z}\psi(z))$ is regular, we have
            \begin{align*}
                S^*T_\phi e_i=&\sum_{j=1}^{n}\overline{a_{ij}}S^*T_\psi^* e_j.
            \end{align*}
            This gives $P(\overline{z}\phi(z))=P(\overline{z}\psi^*(z)A^*).$ Equivalently statement (\ref{4ii}) holds. Using the arguments given due to statements (\ref{4i}) and (\ref{4ii}) we note that
            \begin{align*}
                P(\phi P(\psi))-P(\psi P(\phi^*))=P(\phi \psi- \psi \phi^*),
            \end{align*}
            and
            \begin{align*}
                P(\psi^*P(\phi^*) - \phi P(\psi^*))=P(\psi^*\phi^* - \phi\psi^*).
            \end{align*}
            Thus by Corollary \ref{T1T2T2T1*} we have
                \begin{align*}
                    P(\phi \psi- \psi \phi^*)=0=P(\psi^*\phi^* - \phi\psi^*).
                \end{align*}
               Hence $\phi \psi= \psi \phi^*.$
            
        \end{proof}
    
    \subsection{When is a Hankel operator skew commutator of a Hankel operator?}
    \begin{theorem}[Theorem 4.3 of \cite{CG}]
        Assume $\phi_1$ and $\phi_2$ are regular. Then $H_{\phi_1}H_{\psi_1}=H_{\psi_2}H_{\phi_2}$ if and only if there exist $A \in M_n(\mathbb{C})$ such that the following hold:
        \begin{enumerate}
            \item $\psi_2(z)=\phi_1(z)A$;
            \item $\psi_1(z)=A\phi_2(z).$
        \end{enumerate}
    \end{theorem}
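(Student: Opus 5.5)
The plan is to compare both sides only through their ``defect'' parts, exploiting the fact that a product of two Hankel operators is Toeplitz up to a finite-rank correction. Since $S^*H=HS$ for every Hankel operator $H$, for Hankel operators $H_1,H_2$ on $H^2_n$ we have
\begin{equation*}
S^*H_1H_2S=H_1SH_2S=H_1SS^*H_2=H_1H_2-H_1\mathcal{D}^2_{S^*}H_2 .
\end{equation*}
Put $\Delta:=H_{\phi_1}H_{\psi_1}-H_{\psi_2}H_{\phi_2}$. Using $\mathcal{D}^2_{T_z^*}=\sum_{i=1}^n e_i\otimes e_i$, this gives
\begin{equation*}
\Delta-S^*\Delta S=\sum_{i=1}^n H_{\phi_1}e_i\otimes H_{\psi_1}^*e_i-\sum_{i=1}^n H_{\psi_2}e_i\otimes H_{\phi_2}^*e_i .
\end{equation*}
As throughout, all Hankel symbols are normalized so that $(I-P)F=F$, and equalities of symbols are read modulo $M_n(H^\infty)$.

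\emph{Sufficiency.} This direction is a direct computation. For a constant matrix $A$, multiplication by $A$ commutes with $P$ and with $h\mapsto \overline z h(\overline z)$. Hence
\begin{equation*}
H_{A\phi_2}=AH_{\phi_2},\qquad H_{\phi_1A}h=P\bigl(\overline z\phi_1(\overline z)Ah(\overline z)\bigr)=H_{\phi_1}(Ah).
\end{equation*}
So if $\psi_2=\phi_1A$ and $\psi_1=A\phi_2$, then both products equal $H_{\phi_1}AH_{\phi_2}$.

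\emph{Necessity.} Assume $\Delta=0$. Then the display above yields the finite-rank identity
\begin{equation*}
\sum_{i=1}^n H_{\phi_1}e_i\otimes H_{\psi_1}^*e_i=\sum_{i=1}^n H_{\psi_2}e_i\otimes H_{\phi_2}^*e_i .
\end{equation*}
Note that $H_Fe_i=P(\overline z F(\overline z)e_i)$ is the reflected $i$-th column of $F$. Because $(I-P)F=F$, the map $c\mapsto H_Fc$ on $\mathbb{C}^n$ is injective exactly when the columns of $F$ are linearly independent. Thus regularity of $\phi_1$ makes $\{H_{\phi_1}e_i\}$ linearly independent, and regularity of $\phi_2$ should make $\{H_{\phi_2}^*e_i\}=\{H_{\widetilde\phi_2}e_i\}$ linearly independent.

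I would proceed in two steps.
\begin{enumerate}
\item Apply Lemma \ref{Tensor lemma} to the identity as written, with the independent right-hand vectors $\{H_{\phi_2}^*e_i\}$. This gives $H_{\psi_2}e_j=\sum_i a_{ij}H_{\phi_1}e_i$, that is, $H_{\psi_2}=H_{\phi_1A}$ with $A=(a_{ij})$. Under the normalization this means $\psi_2=\phi_1A$.
\item Substitute this back into the identity and regroup the double sum as in the proof of Theorem \ref{Th1}, moving $a_{ij}$ to the right slot as $\overline{a_{ij}}$. This produces
\begin{equation*}
\sum_i H_{\phi_1}e_i\otimes H_{\psi_1}^*e_i=\sum_i H_{\phi_1}e_i\otimes\sum_j\overline{a_{ij}}H_{\phi_2}^*e_j .
\end{equation*}
Take adjoints of both sides and apply Lemma \ref{Tensor lemma} again, now with the independent family $\{H_{\phi_1}e_i\}$ in the right slot. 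This gives $H_{\psi_1}^*e_i=\sum_j\overline{a_{ij}}H_{\phi_2}^*e_j$, i.e.\ $H_{\widetilde\psi_1}=H_{\widetilde\phi_2A^*}$. Applying $\widetilde{\ \cdot\ }$ then yields $\psi_1=A\phi_2$ modulo analytic functions.
\end{enumerate}

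The main obstacle is the independence claim for $\{H_{\phi_2}^*e_i\}$. This set consists of the reflected columns of $\widetilde\phi_2=\phi_2^*(\overline z)$, which correspond to the \emph{rows} of $\phi_2$, whereas regularity is defined via columns. I would first check whether, for square matrix symbols in this setting, the hypothesis can be read (as in \cite{CG}) as giving injectivity of both $c\mapsto H_{\phi_2}c$ and $c\mapsto H_{\phi_2}^*c$. If it cannot, I would swap the roles of the two steps: use independence of $\{H_{\phi_1}e_i\}$ first (after taking adjoints) to get $\psi_1=A\phi_2$, and then use independence of $\{H_{\phi_2}e_i\}$ on the adjoint identity to get $\psi_2=\phi_1A$. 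In that case, the point to verify is that both steps produce the same matrix $A$.
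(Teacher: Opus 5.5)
Your reduction is the right one: it is the \cite{CG} technique the paper uses throughout Section 2. The identity $H_1H_2-S^*H_1H_2S=H_1\mathcal{D}^2_{S^*}H_2$ turns the equation into the rank-$n$ identity. Your sufficiency computation via $H_{A\phi_2}=AH_{\phi_2}$ and $H_{\phi_1A}=H_{\phi_1}A$ is also correct, and it needs no regularity.

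The gap is exactly the one you flagged, and it cannot be closed. Step 1 needs $\{H_{\phi_2}^*e_i\}=\{H_{\widetilde\phi_2}e_i\}$ to be linearly independent. That means $\widetilde\phi_2$ is regular, i.e.\ the \emph{rows} of $\phi_2$ are independent. For $n\ge 2$ this does not follow from independence of the columns of $\phi_2$.

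Your fallback does not help, because the vectors $H_{\phi_2}e_i$ never occur in the identity. The identity involves only $H_{\phi_1}e_i$, $H_{\widetilde\psi_1}e_i$, $H_{\psi_2}e_i$ and $H_{\widetilde\phi_2}e_i$. After the adjoint step gives $H_{\widetilde\psi_1}e_i=H_{\widetilde\phi_2}(Be_i)$, substituting leaves
\[
\sum_{k=1}^n H_{\widetilde\phi_2}e_k\otimes\Bigl(\sum_{i=1}^n\overline{b_{ki}}\,H_{\phi_1}e_i-H_{\psi_2}e_k\Bigr)=0 .
\]
To conclude that each bracket vanishes, you again need $\{H_{\widetilde\phi_2}e_k\}$ to be independent, which is the same condition.

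In fact the statement, read with this paper's definitions of $H_F$ and of regularity, is false for $n\ge 2$. Take
\[
\phi_1=\overline z I,\quad \phi_2=\begin{pmatrix}\overline z&\overline z^{2}\\ 0&0\end{pmatrix},\quad \psi_1=0,\quad \psi_2=\begin{pmatrix}0&\overline z^{2}\\ 0&0\end{pmatrix}.
\]
\begin{itemize}
\item Both $\phi_1$ and $\phi_2$ have linearly independent columns, so both are regular.
\item The second row of $\phi_2$ vanishes, so every $H_{\phi_2}h=P(\overline z\phi_2(\overline z)h(\overline z))$ has zero second component.
\item The first column of $\psi_2$ vanishes, so $H_{\psi_2}$ annihilates every function with zero second component.
\item Hence $H_{\psi_2}H_{\phi_2}=0=H_{\phi_1}H_{\psi_1}$.
\item But $\psi_2-\phi_1A$ has $\overline z^{2}$-coefficient $\begin{pmatrix}0&1\\ 0&0\end{pmatrix}\neq 0$ for every $A$, so it is never analytic.
\end{itemize}
Consistently with the diagnosis, $\widetilde\phi_2=\begin{pmatrix}\overline z&0\\ \overline z^{2}&0\end{pmatrix}$ is not regular.

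In this convention the correct hypothesis is that $\phi_1$ and $\widetilde\phi_2$ are regular; the discrepancy is presumably a change of convention relative to \cite{CG}. Under that hypothesis your primary two-step argument is a complete proof:
\begin{itemize}
\item Step 1 gives $\psi_2=\phi_1A$.
\item In Step 2 the independent family $\{H_{\phi_1}e_i\}$ occupies the same slot on both sides. Pairing against a dual family gives $H_{\widetilde\psi_1}e_i=H_{\widetilde\phi_2}(A^*e_i)$ term by term. This is slightly more than Lemma \ref{Tensor lemma} states, but it is immediate.
\item Hence $\widetilde\psi_1=\widetilde\phi_2A^*$, i.e.\ $\psi_1=A\phi_2$.
\end{itemize}
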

    \begin{corollary}
         Assume that $\phi(z)$ is regular. Then, $H_\phi H_\psi=H^*_\psi H_\phi$ if and only if $\psi(z)=A\phi(z)$ and $\tilde \psi(z)=\phi(z)A,$ for some $A \in M_n(\mathbb{C}).$
    \end{corollary}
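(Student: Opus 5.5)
This follows by specializing Theorem 4.3 of \cite{CG} (the theorem stated just above). The idea is to rewrite the left adjoint as a Hankel operator with a flipped symbol. Since $H_F^*=H_{\widetilde F}$ for every $F\in M_n(L^\infty)$, we have
$$H^*_\psi=H_{\widetilde\psi}.$$
The equation $H_\phi H_\psi=H^*_\psi H_\phi$ therefore becomes
$$H_\phi H_\psi=H_{\widetilde\psi}H_\phi.$$
This has exactly the form $H_{\phi_1}H_{\psi_1}=H_{\psi_2}H_{\phi_2}$ with
$$\phi_1=\phi_2=\phi,\qquad \psi_1=\psi,\qquad \psi_2=\widetilde\psi.$$
The regularity hypothesis of Theorem 4.3 requires $\phi_1$ and $\phi_2$ to be regular. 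Both equal $\phi$, so this is exactly the assumption of the corollary.

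Applying the theorem, the equality holds if and only if there is $A\in M_n(\mathbb{C})$ with $\psi_2=\phi_1A$ and $\psi_1=A\phi_2$. Substituting gives $\widetilde\psi(z)=\phi(z)A$ and $\psi(z)=A\phi(z)$, which are the stated conditions. Both directions come from the "if and only if" in Theorem 4.3, so no separate converse argument is needed.

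The only point needing care is the normalization of symbols. The paper assumes Hankel symbols satisfy $(I-P)F=F$, and the identities in Theorem 4.3 are read modulo this convention, that is, as equalities of the co-analytic parts determining the Hankel operators. So I would check that $\widetilde\psi$ is the correct normalized symbol of $H_\psi^*$. The flip $F\mapsto F^*(\overline z)$ sends the span of $\overline z^{k}$, $k\ge1$, to itself, so $(I-P)\widetilde\psi=\widetilde\psi$ whenever $(I-P)\psi=\psi$. This is the step I expect to be the main (though mild) obstacle; with it in hand, the direct substitution above completes the proof.
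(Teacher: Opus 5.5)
Your proposal is correct and matches the paper's intended argument. The paper gives no separate proof: the corollary is the specialization of Theorem 4.3 of \cite{CG} with $\phi_1=\phi_2=\phi$, $\psi_1=\psi$, $\psi_2=\widetilde\psi$, using $H_\psi^*=H_{\widetilde\psi}$. Your check that $F\mapsto F^*(\overline z)$ preserves co-analytic symbols is a harmless extra detail, consistent with the paper's normalization $(I-P)F=F$.
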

    \begin{corollary}
        Assume that $\phi(z)$ and $\tilde \phi(z)$ are regular. Then, $H_\phi H_\psi=H^*_\psi H^*_\phi$ if and only if $\psi(z)=A\tilde \phi(z)$ and $\tilde \psi(z)=\phi(z)A,$ for some $A \in M_n(\mathbb{C}).$
    \end{corollary}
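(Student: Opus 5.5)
The plan is to derive the second corollary directly from Theorem 4.3 of \cite{CG}, quoted above. The idea is to rewrite $H_\psi^*$ and $H_\phi^*$ as Hankel operators and then match symbols. We use the fact recalled in the preliminaries that $H_F^*=H_{\widetilde F}$ for every $F\in M_n(L^\infty)$. This gives $H^*_\psi=H_{\tilde\psi}$ and $H^*_\phi=H_{\tilde\phi}$, so the equation $H_\phi H_\psi=H^*_\psi H^*_\phi$ becomes
\[
H_{\phi}H_{\psi}=H_{\tilde\psi}H_{\tilde\phi}.
\]
This has exactly the form $H_{\phi_1}H_{\psi_1}=H_{\psi_2}H_{\phi_2}$ of Theorem 4.3 of \cite{CG}, with
\[
\phi_1=\phi,\qquad \psi_1=\psi,\qquad \psi_2=\tilde\psi,\qquad \phi_2=\tilde\phi.
\]

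The regularity hypotheses of that theorem ask that $\phi_1=\phi$ and $\phi_2=\tilde\phi$ be regular, which is precisely what is assumed. Theorem 4.3 therefore gives that the operator equation holds if and only if there is $A\in M_n(\mathbb{C})$ with
\[
\tilde\psi(z)=\psi_2(z)=\phi_1(z)A=\phi(z)A \quad\text{and}\quad \psi(z)=\psi_1(z)=A\phi_2(z)=A\tilde\phi(z).
\]
These are exactly the two stated conditions, and both directions follow at once from the equivalence in the theorem.

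The only delicate point is the normalization of symbols. Hankel operators only determine their symbol modulo $M_n(H^\infty)$, and the paper assumes $(I-P)F=F$. So we must check that $\tilde\psi$ and $\tilde\phi$ are again normalized in the same sense, which should follow since $F\mapsto F^*(\overline z)$ preserves the class of symbols with the right Fourier support. If it does not, the equalities in Theorem 4.3 should be read as equalities of symbols in the normalized class, i.e. modulo analytic parts, and the conclusion is stated in that same sense. Beyond this bookkeeping, which I expect to be the main obstacle, no computation is needed.
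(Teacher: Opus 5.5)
Your proposal is correct and is the intended argument: the paper gives no separate proof and presents this as an immediate specialization of the quoted Theorem 4.3 of Gu, using $H_F^*=H_{\tilde F}$ with $\phi_1=\phi$, $\psi_1=\psi$, $\psi_2=\tilde\psi$, $\phi_2=\tilde\phi$. The normalization issue you flag is harmless, because $\tilde F(z)=\sum_k z^k\hat F(k)^*$ has the same Fourier support as $F$, so $\tilde\psi$ and $\tilde\phi$ are again normalized and the symbol identities hold as literal equalities.
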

    \section{Solutions of operator equations}\label{section solutions of op eq}
    In the previous sections, we have seen results on skew commutators of Toeplitz and Hankel Operators on vector-valued Hardy spaces. In this section, we will see several types of skew commutators related to $S$ and $S^*$ defined on $H^2$.
        \begin{theorem}\label{th1}
            Let $S$ be the unilateral shift on $H^2.$ Then for any bounded linear operator $X$ on $H^2,$
            \begin{enumerate}
                \item  $SX=XS^*$ if and only if $X=0.$\label{p-1}
                \item $SX=X^*S^*$ if and only if $X=AS^*,$ for some self adjoint operator $A \in \mathcal{B}(H^2).$\label{p-2}
            \end{enumerate}
        \end{theorem}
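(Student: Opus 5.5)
For part (1), I will iterate the relation. If $SX=XS^*$, then induction gives $S^nX=X{S^*}^n$ for every $n\in\mathbb{N}$. The right side tends to $0$ in the strong operator topology, since ${S^*}^n\to 0$ strongly. The left side, however, is an isometry applied to $X$, so $\|S^nXh\|=\|Xh\|$ for every $h$. Letting $n\to\infty$ gives $\|Xh\|=\lim\|X{S^*}^nh\|=0$, hence $X=0$. The converse is trivial.

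For part (2), the sufficiency is a direct computation. If $X=AS^*$ with $A=A^*$, then $X^*=SA$. Hence $SX=SAS^*$ and $X^*S^*=SAS^*$, so the two sides agree.

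For necessity, suppose $SX=X^*S^*$. The plan is to show that $X$ annihilates the wandering subspace and then factor through $S^*$.

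\emph{Step 1: $X$ kills constants.} Apply both sides to $e_0=1$. Since $S^*e_0=0$, we get $SXe_0=0$. Because $S$ is injective, $Xe_0=0$, that is, $X\mathcal{D}^2_{S^*}=0$. Therefore
$$X=X(SS^*+\mathcal{D}^2_{S^*})=XSS^*.$$

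\emph{Step 2: define $A$.} Set $A:=XS$. Step 1 gives $X=AS^*$. Substituting into the hypothesis yields $SAS^*=SA^*S^*$.

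\emph{Step 3: $A$ is self-adjoint.} Multiply $SAS^*=SA^*S^*$ on the left by $S^*$ and on the right by $S$. Using $S^*S=I$, this gives $A=A^*$.

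The main point to watch is Step 1, namely that the wandering vector lies in $\ker X$. It follows immediately from the injectivity of $S$, so I do not expect a real obstacle. The rest is algebra with $S^*S=I$ and the decomposition $I=SS^*+\mathcal{D}^2_{S^*}$ already used in the paper.
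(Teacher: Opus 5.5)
Your proof is correct and takes essentially the same route as the paper. Part (1) is the same iteration argument using ${S^*}^n \to 0$ in SOT: the paper writes it as $X = {S^*}^n X {S^*}^n$, while you use $S^nX = X{S^*}^n$ together with the isometry property. In part (2) both arguments set $A = XS$; the only cosmetic difference is that the paper left-multiplies by $S^*$ to get $X = S^*X^*S^*$ and reads off $X = A^*S^*$ and $XS = S^*X^*$ (hence $A = A^*$), whereas you get $X = XSS^*$ from $Xe_0 = 0$ and then compress $SAS^* = SA^*S^*$.
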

    
        \begin{proof}
           Let $X$ satisfies $SX=XS^*$. Then $X$ also satisfies $X=S^*XS^*.$ Using the similar argument in Proposition \ref{prop1}, we get that $X=0.$
            Let $A$ be a self adjoint operator, and let $X=AS^*$,
            \begin{align*}
                SX=SAS^*=SA^*S^*=(AS^*)^*S^*=X^*S^*,
            \end{align*}
           This gives $X=AS^*,$ with $A=A^*$ is solution to the operator equation. Conversely, let $X$ be such that
            \begin{align*}
                SX=X^*S^* 
            \end{align*}
            Composing with $S^*$ on both sides, we get
            \begin{align}
                X=S^*X^*S^*
            \end{align}
            Let $A=XS$. Then
            \begin{equation*}
                A^*=S^*X^*=S^*(SXS)=XS=A.
            \end{equation*}
            and
            \begin{equation*}
                X=S^*X^*S^* =A^*S^*=AS^*.
            \end{equation*}
           \end{proof}
        Before proceeding to other operator equations involving the skew commutator of a unilateral shift, we look at the following:\\
         For any $f \in H^2,$ with $f(z)=\sum_{n=0}^{\infty} a_nz^n$, can be uniquely written as
        \begin{equation}
            f(z)=f_2(z^2)+zf_1(z^2),
        \end{equation}
        where $f_2(z)=\sum_{n=0}^{\infty} a_{2n}z^{n}$ and $f_1(z)=\sum_{n=0}^{\infty} a_{2n+1}z^{n}$. We define
        \begin{equation*}
            U:H^2\to H^2\oplus H^2 \text{ as } U(f)=(f_2,f_1).
        \end{equation*}
         It is easy to see that $U$ is unitary.
        \begin{theorem}\label{p-3}
           A bounded linear operator $X$ on $H^2$ satisfies 
           $$SX=X^*S$$
           if and only if 
           $$X=U^*\begin{pmatrix}
                 T^*_{\phi_{22}}&T_{\overline{f}+zf}\\
                 0&T_{\phi_{22}}
             \end{pmatrix}U,$$
           for some $ \phi_{22}\in H^{\infty}, f \in H^2 \text{ with } {\overline{f}+zf} \in L^\infty$ and $U$ is defined as above.
        \end{theorem}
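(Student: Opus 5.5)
The plan is to conjugate by the unitary $U$ and reduce the equation $SX=X^*S$ to four operator equations, one for each entry of a $2\times 2$ block matrix on $H^2\oplus H^2$. Each equation should then be handled by a tool already used in the paper: the Brown--Halmos characterization of Toeplitz operators, the characterization of the commutant of $S$, and the SOT argument from Proposition \ref{prop1}.

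\emph{Step 1: compute $USU^*$.} For $f(z)=f_2(z^2)+zf_1(z^2)$ we have $Sf(z)=zf_1(z^2)\cdot z+zf_2(z^2)=(zf_1)(z^2)+z\,f_2(z^2)$. Hence $U S f=(Sf_1,f_2)$, that is,
$$\widetilde S:=USU^*=\begin{pmatrix}0&S\\ I&0\end{pmatrix}.$$
Now set $Y=UXU^*=\begin{pmatrix}Y_{11}&Y_{12}\\ Y_{21}&Y_{22}\end{pmatrix}$. The equation $SX=X^*S$ is equivalent to $\widetilde SY=Y^*\widetilde S$. Multiplying out both sides and comparing entries gives
$$SY_{21}=Y_{21}^*,\qquad SY_{22}=Y_{11}^*S,\qquad Y_{11}=Y_{22}^*,\qquad Y_{12}=Y_{12}^*S.$$

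\emph{Step 2: solve the four equations.}
\begin{enumerate}
\item Taking adjoints in the first equation gives $Y_{21}=Y_{21}^*S^*=SY_{21}S^*$. Iterating, $Y_{21}=S^nY_{21}{S^*}^n$, so $\|Y_{21}x\|\le\|Y_{21}\|\,\|{S^*}^nx\|\to0$. Hence $Y_{21}=0$, exactly as in Proposition \ref{prop1}.
\item Substituting $Y_{11}^*=Y_{22}$ into the second equation gives $SY_{22}=Y_{22}S$. Since the commutant of $S$ consists of the analytic Toeplitz operators, $Y_{22}=T_{\phi_{22}}$ with $\phi_{22}\in H^\infty$, and then $Y_{11}=T_{\phi_{22}}^*$.
\item The fourth equation gives $Y_{12}^*=S^*Y_{12}$, and therefore $Y_{12}=Y_{12}^*S=S^*Y_{12}S$. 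By Brown--Halmos, $Y_{12}=T_\psi$ for some $\psi\in L^\infty$. The equation then reads $T_\psi=T_{\overline\psi}T_z=T_{z\overline\psi}$, so $\psi=z\overline\psi$.
\end{enumerate}

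\emph{Step 3: describe the symbol $\psi$.} Write $\psi=\sum_{n\in\mathbb Z}c_nz^n$. The identity $\psi=z\overline\psi$ is equivalent to $c_n=\overline{c_{1-n}}$ for all $n$. Put $f=\sum_{n\ge0}c_{n+1}z^n\in H^2$. Then $zf$ carries the coefficients $c_m$ for $m\ge1$, and $\overline f$ carries $\overline{c_{n+1}}=c_{-n}$ at $z^{-n}$ for $n\ge0$. Hence $\psi=\overline f+zf$, and this lies in $L^\infty$ because $\psi$ does.

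\emph{Step 4: converse.} Given $\phi_{22}\in H^\infty$ and $f\in H^2$ with $\psi=\overline f+zf\in L^\infty$, we have $z\overline\psi=\psi$. The four entry equations can then be checked directly: $T_{\overline\psi}T_z=T_{z\overline\psi}=T_\psi$ holds because $z$ is analytic, and $T_{\phi_{22}}$ commutes with $S$. Hence $\widetilde SY=Y^*\widetilde S$, and conjugating back by $U$ gives $SX=X^*S$.

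The main obstacle is Step 1, getting $USU^*$ exactly right, including the order of the components $(f_2,f_1)$, since every subsequent equation depends on it. The only genuinely analytic step is $Y_{21}=0$, which is the SOT argument already used in the paper.
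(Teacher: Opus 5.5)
Your proposal is correct and follows essentially the same route as the paper. You conjugate by $U$ to get $\widetilde S=\begin{pmatrix}0&S\\ I&0\end{pmatrix}$, reduce to the same four block equations, kill $Y_{21}$ by the ${S^*}^n\to 0$ (SOT) argument, identify $Y_{22}$ through the commutant of $S$, and identify $Y_{12}=T_\psi$ via Brown--Halmos with $\psi=z\overline{\psi}$. Your Fourier-coefficient derivation of $\psi=\overline f+zf$ and your entrywise check of the converse supply details that the paper either asserts without argument or handles with a single matrix product.
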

        \begin{proof}
        The idea is to take the classical unilateral shift $S$ to an operator $\tilde S$ which unitarily equivalent to $S$ via unitary $U$ defined on $H^2 \oplus H^2,$ $i.e.$
        \begin{equation*}
            \tilde S= USU^*=
            \begin{pmatrix}
                0&S\\
                I&0
            \end{pmatrix}.
        \end{equation*}
        Let $Y=UXU^*.$ We solve the following operator equation.
        \begin{equation}
            \tilde S Y=Y^*\tilde S.
        \end{equation}
        Writing them in the block form with respect to the decomposition $H^2 \oplus H^2,$ we get
        \begin{equation*}
            \begin{pmatrix}
                 0&S\\
                I&0
            \end{pmatrix}
            \begin{pmatrix}
                Y_{11}&Y_{12}\\
                Y_{21}&Y_{22}
            \end{pmatrix}
            =
             \begin{pmatrix}
                Y^*_{11}&Y^*_{21}\\
                Y^*_{12}&Y^*_{22}
            \end{pmatrix}
            \begin{pmatrix}
                 0&S\\
                I&0
            \end{pmatrix}.
        \end{equation*}
        This gives rise to the following operator equations:
        \begin{align}
            SY_{21}=Y^*_{21}; \label{e.1} \\
            SY_{22}=Y^*_{11}S;\label{e.2} \\
            Y_{11}=Y^*_{22};\label{e.3} \\
            Y_{12}=Y^*_{12}S. \label{e.4}
        \end{align}
        To solve equation (\ref{e.1}) we take adjoint both the sides, we get
        \begin{equation*}
            Y_{21}=Y^*_{21}S^*.
        \end{equation*}
        This implies
        \begin{align*}
             Y_{21}=SY_{21}S^* .
        \end{align*}
        The fact that ${S^*}^n \xrightarrow{\text{SOT}} 0,$ we have $Y_{21}=0.$ From equation (\ref{e.2}) and equation (\ref{e.3}) we get $Y_{22}=T_{\phi_{22}},$ for some $\phi_{22} \in H^\infty.$
         To solve equation (\ref{e.4}), let us take its adjoint
         \begin{align*}
           Y_{12}=Y^*_{12}S.
         \end{align*}
         This implies 
         \begin{align*}
              Y_{12}= S^*Y_{12}S.
         \end{align*}
         Thus we have 
         \begin{align*}
             Y_{12}=T_{\phi_{12}}, \text{ for some } \phi_{12} \in L^\infty.
         \end{align*}
         Also,
         \begin{align*}
             T_{\phi_{12}}=T^*_{\phi_{12}}T_z=T_{\overline \phi_{12}z}.
         \end{align*}
         Therefore we have
         \begin{align*}
             \phi_{12}=z\overline{\phi_{12}}.
         \end{align*}
         The above condition gives that $\phi_{12}=\overline{f}+zf.$
         We get
         \begin{equation*}
         Y=
             \begin{pmatrix}
                 T^*_{\phi_{22}}&T_{\overline{f}+zf}\\
                 0&T_{\phi_{22}}
             \end{pmatrix},
         \end{equation*}
         where $\phi_{22}\in H^\infty,$ $f \in H^2$ with ${\overline{f}+zf}\in L^\infty.$ Conversely, note that
         \begin{align*}
             \begin{pmatrix}
                 0&S\\
                I&0
            \end{pmatrix}
            \begin{pmatrix}
                 T^*_{\phi_{22}}&T_{\overline{f}+zf}\\
                 0&T_{\phi_{22}}
             \end{pmatrix}
             &=\begin{pmatrix}
             0&ST_{\phi_{22}}\\
             T^*_{\phi_{22}}&T_{\overline{f}+zf}
             \end{pmatrix}\\
             &=\begin{pmatrix}
             0&T_{\phi_{22}}S\\
             T^*_{\phi_{22}}&T^*_{f+\overline {zf}}
             \end{pmatrix}\\
             &=\begin{pmatrix}
                 T_{\phi_{22}}&0\\
                 T^*_{\overline{f}+zf}&T^*_{\phi_{22}}
             \end{pmatrix}
             \begin{pmatrix}
                 0&S\\
                I&0
            \end{pmatrix}.
         \end{align*}
        \end{proof}
    The skew commutator of the unilateral shift is given by Nehari, more precisely,
    \begin{theorem}[Nehari] Let $X \in \mathcal{B}(H^2).$ Then
        $XS=S^*X$ if and only if $X=H_{\phi},$ for some $\phi \in L^\infty.$\label{p-4}
    \end{theorem}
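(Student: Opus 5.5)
The statement is the classical Nehari characterization: a bounded operator $X$ on $H^2$ with $XS=S^*X$ is a Hankel operator $H_\phi$ for some $\phi\in L^\infty$. The plan has two parts. The easy direction is a direct symbol computation, and the converse reconstructs a symbol from the matrix of $X$ and then invokes Nehari's extension theorem to make it bounded.

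\emph{Sufficiency.} Let $X=H_\phi=P\mathcal{J}M_\phi|_{H^2}$. I would check $H_\phi S=S^*H_\phi$ on monomials. The flip satisfies $\mathcal{J}(zg)(z)=\overline z\,\overline z\,g(\overline z)=\overline z\,\mathcal{J}g(z)$, so $\mathcal{J}M_z=M_{\overline z}\mathcal{J}$. Hence
\[
H_\phi S h=P\mathcal{J}M_\phi M_z h=P M_{\overline z}\mathcal{J}M_\phi h .
\]
Using $PM_{\overline z}P=PM_{\overline z}$ on $L^2$, which holds because $M_{\overline z}$ maps $(H^2)^\perp$ into itself, this equals $S^*P\mathcal{J}M_\phi h=S^*H_\phi h$.

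\emph{Necessity, step 1 (matrix structure).} Suppose $XS=S^*X$ and write $x_{jk}=\langle Xz^k,z^j\rangle$. Then
\[
x_{j,k+1}=\langle XSz^k,z^j\rangle=\langle S^*Xz^k,z^j\rangle=\langle Xz^k,z^{j+1}\rangle=x_{j+1,k}.
\]
So $x_{jk}=c_{j+k}$ depends only on $j+k$, and the matrix of $X$ is a Hankel matrix. Moreover $c_k=\langle X1,z^k\rangle$, so $(c_k)\in\ell^2$ because $X1=\sum_k c_kz^k\in H^2$.

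\emph{Necessity, step 2 (a candidate symbol).} With the paper's convention, $H_\phi z^k=P\mathcal{J}(z^k\phi)$, and $\mathcal{J}(z^m)=\overline z^{\,m+1}$. So if $\phi=\sum\hat\phi(m)z^m$, then
\[
\langle H_\phi z^k,z^j\rangle=\hat\phi(-(j+k+1)).
\]
We therefore need a $\phi\in L^\infty$ with $\hat\phi(-(m+1))=c_m$ for all $m\ge0$. The formal candidate is $\phi_0=\sum_{m\ge0}c_m\overline z^{\,m+1}\in L^2$, which has the right negative Fourier coefficients but need not be bounded.

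\emph{Necessity, step 3 (boundedness: the main obstacle).} The point is to replace $\phi_0$ by $\phi=\phi_0+g$ with $g\in H^2$ and $\phi\in L^\infty$. This is exactly Nehari's theorem: the bilinear form $(f,h)\mapsto\langle X f,\overline{\cdot}\,\rangle$ is bounded, and this boundedness is equivalent to the conjugate-analytic function $\phi_0$ having distance at most $\|X\|$ from $H^2$ in the sense of a bounded extension. I would cite Nehari \cite{ZN} directly, as the paper already does for the identification "Hankel matrix $\Rightarrow$ $H_\phi$ with $\phi\in L^\infty$." If a proof were required, the standard route is as follows. Factor any $F\in H^1$ as $F=f h$ with $f,h\in H^2$ and $\|f\|_2\|h\|_2=\|F\|_1$. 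This makes the functional
\[
F\mapsto \sum_m c_m\hat F(m)
\]
bounded on $H^1$ with norm at most $\|X\|$. Extend it by Hahn–Banach to $L^1$ and represent the extension by some $\psi\in L^\infty$; a suitable reflection of $\psi$ then gives $\phi$.

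\emph{Conclusion.} The operators $X$ and $H_\phi$ are both bounded and have the same matrix entries on the basis $\{z^k\}$, so $X=H_\phi$.
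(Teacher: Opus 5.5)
Your proof is correct. Note, though, that the paper gives no proof of this statement: it records it as a classical result attributed to Nehari \cite{ZN}, just as the introduction does when it says that $S^*A=AS$ characterizes the Hankel operators $H_\phi$, $\phi\in L^\infty$. So you are not taking a different route; you are supplying the argument the paper takes for granted. That argument has four parts:
\begin{enumerate}
\item[(a)] the sufficiency computation via $\mathcal{J}M_z=M_{\overline z}\mathcal{J}$ and $PM_{\overline z}(I-P)=0$;
\item[(b)] the observation that the relation forces the matrix of $X$ to be the Hankel matrix $(c_{j+k})$ with $c_m=\langle X1,z^m\rangle$;
\item[(c)] the identification $\langle H_\phi z^k,z^j\rangle=\hat\phi(-(j+k+1))$ under the paper's normalization of $\mathcal{J}$;
\item[(d)] the only substantial step, producing a bounded symbol, via the strong factorization $H^1=H^2\cdot H^2$ and Hahn--Banach.
\end{enumerate}
Your version buys self-containment and the bound $\|\phi\|_\infty\le\|X\|$, at the cost of importing inner--outer factorization. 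The paper's citation buys brevity.

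Three details should be tightened if you write this out.
\begin{enumerate}
\item[(1)] Define $\Lambda(F)=\sum_m c_m\hat F(m)$ only on analytic polynomials, since the series need not converge for general $F\in H^1$, and then extend by density.
\item[(2)] For a polynomial $F=fh$ whose $H^2$ factors need not be polynomials, the bound $|\Lambda(F)|\le\|X\|\,\|f\|_2\|h\|_2$ needs a limiting step. Dilations do it: with $f_r(z)=f(rz)$, $h_r(z)=h(rz)$ and $\tilde h=\sum_j\overline{\hat h(j)}\,z^j$, the double series is absolutely convergent and gives $\langle Xf_r,\tilde h_r\rangle=\sum_m c_m r^m\hat F(m)$. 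Now let $r\to 1$. This pairing is the bilinear form you wrote as $\langle Xf,\overline{\cdot}\,\rangle$.
\item[(3)] If the extension is represented as $\Lambda(F)=\int_{\mathbb T}F\psi\,dm$, then $\hat\psi(-m)=c_m$ for $m\ge 0$. The required symbol is therefore $\phi=\overline z\,\psi$, obtained by multiplication by $\overline z$ rather than a reflection; it satisfies $\hat\phi(-(m+1))=c_m$.
\end{enumerate}
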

    Motivated by this, let us find other skew commutators as well.
        \begin{theorem}\label{th2}
        Let $X \in \mathcal{B}(H^2)$ and let $P_{\langle 1 \rangle }$ be the orthogonal projection of $H^2$ onto $\langle 1 \rangle$. Then
            $$XS=S^*X^*$$ if and only if
                  \begin{equation}\label{p-6}
                      X= TP_{\langle 1 \rangle }+S^*A+BS^*,
                  \end{equation}
        for some $A,B$ and $T \in \mathcal{B}(H^2)$ with $A=A^*$ and $B=B^*.$

        \end{theorem}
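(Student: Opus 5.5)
The plan is to verify the two directions separately, using only the relations $S^*S=I$, $P_{\langle 1\rangle}=I-SS^*$ and $P_{\langle 1 \rangle}S=0$. Both directions reduce to short algebraic identities, so no limiting argument of the type used in Proposition \ref{prop1} is needed.

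\emph{Sufficiency.} Let $X=TP_{\langle 1\rangle}+S^*A+BS^*$ with $A=A^*$ and $B=B^*$. Since $P_{\langle 1\rangle}S=0$ and $S^*S=I$, we get
$$XS=S^*AS+B.$$
Taking adjoints gives $X^*=P_{\langle 1\rangle}T^*+AS+SB$. Since $S^*P_{\langle 1\rangle}=(P_{\langle 1\rangle}S)^*=0$, it follows that
$$S^*X^*=S^*AS+S^*SB=S^*AS+B.$$
The two expressions coincide, so $XS=S^*X^*$. Self-adjointness of $A$ and $B$ is used exactly when forming $X^*$.

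\emph{Necessity.} Suppose $XS=S^*X^*$ and set $C:=XS$. The hypothesis gives $C=S^*X^*=(XS)^*=C^*$, so $C$ is self-adjoint. This is the same trick as in part (\ref{p-2}) of Theorem \ref{th1}. Next decompose the identity as $I=P_{\langle 1\rangle}+SS^*$, which yields
$$X=XP_{\langle 1\rangle}+XSS^*=XP_{\langle 1\rangle}+CS^*.$$
This is of the form (\ref{p-6}) with $T=X$, $A=0$ and $B=C=XS$.

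The only real point is to spot that $XS$ is automatically self-adjoint. Once that is seen, the decomposition $I=P_{\langle 1\rangle}+SS^*$ finishes the argument.

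It is also worth remarking that the term $S^*A$ in (\ref{p-6}) is redundant: $S^*A=S^*AP_{\langle 1\rangle}+(S^*AS)S^*$, and $S^*AS$ is self-adjoint. Allowing $A$ merely enlarges the family of representations of the same solution set and does not produce new solutions. Since the statement allows $A$, the sufficiency direction must still handle it, and the computation above does so.
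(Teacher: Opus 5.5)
Your proof is correct and follows the paper's argument: both use $P_{\langle 1\rangle}S=0$, $S^*S=I$, the automatic self-adjointness of $XS$, and the splitting $I=P_{\langle 1\rangle}+SS^*$. The paper fixes an arbitrary self-adjoint $A$ and sets $B=(X-S^*A)S$, so your choice $A=0$, $B=XS$, $T=X$ is the special case $A=0$; your remark that $S^*A$ is redundant makes explicit the non-uniqueness that the paper's free choice of $A$ already shows.
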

        \begin{proof}
            Let $A,B$ and $T \in \mathcal{B}(H^2)$ with $A,B$ self adjoint and $X$ be as described in (\ref{p-6}). One can easily see that 
            \begin{align*}
                XS=S^*AS+B=S^*X^*.
            \end{align*}
            Conversely, let $XS=S^*X^*$ and let $A$ be any self-adjoint operator. Consider
            \begin{equation*}
                B:=(X-S^*A)S.
            \end{equation*}
            Clearly $B$ is self-adjoint. This gives
            \begin{align*}
                &XS=S^*AS+B
            \end{align*}
            Composing with $S^*$ we get
            \begin{align*}
                XSS^*=S^*ASS^*+BS^*.
            \end{align*}
            This gives, for $n \geq 1$
            \begin{align*}
                X(z^n)=(S^*A+BS^*)(z^n).
            \end{align*}
            So, we can write
            \begin{align*}
                X=TP_{\langle 1 \rangle } +S^*A+BS^*, \text{ where } T \in \mathcal{B}(H^2).
            \end{align*}
        \end{proof}
        
            The following characterizations can be derived by taking the adjoint and using the above theorems.
            \begin{remark}
            For a bounded linear operator $X \in \mathcal{B}(H^2),$ we have the following:
            \begin{enumerate}
            \item $XS=SX^*$ if and only if $X=U^*YU,$
            where
            \begin{equation*}
            Y=
           \begin{pmatrix}
                 T_{\phi_{22}}&0\\
                 T_{f+\overline{zf}}&T^*_{\phi_{22}}
             \end{pmatrix},
             \end{equation*}
             for some $ \phi_{22}\in H^{\infty},  f \in H^2 \text{ with } {\overline{f}+zf} \in L^\infty.$ 
             \item $S^*X=X^*S^*$ if and only if $X=U^*YU,$ where
                \begin{equation*}
                    Y=
                    \begin{pmatrix}
                 T^*_{\phi_{22}}&T_{\overline{f}+zf}\\
                 0&T_{\phi_{22}}
             \end{pmatrix},
                \end{equation*}
           for some $ \phi_{22}\in H^{\infty}, f \in H^2 \text{ with } {\overline{f}+zf} \in L^\infty.$
           \item $S^*X=X^*S$ if and only if
                    $X= P_{\langle 1 \rangle }T+AS+SB,$ for some $A,B$ and $T \in \mathcal{B}(H^2)$ with $A=A^*$ and $B=B^*.$
            \end{enumerate}
        \end{remark}
        
       Gu and Luo in their paper \cite{CGSL} have found the operator which intertwines $S_{\mathcal{H}_1}\oplus S_{\mathcal{H}_2}^*$ and $S_{\mathcal{H}_1}\oplus S_{\mathcal{H}_2}$ on the Vector valued Hardy space $H^2(\mathcal{H}_1)\oplus H^2(\mathcal{H}_2)$, which is precisely of the form
       \begin{equation*}
           \begin{pmatrix}
               T_{\phi_1}& T_{\phi_2}\\
               H_{\psi_1}& H_{\psi_2}
            \end{pmatrix}
       \end{equation*}
       for some 
       $\phi_{1} \in H^\infty(\mathcal{B}(\mathcal{H}_1)), \phi_{2} \in H^\infty(\mathcal{B}(\mathcal{H}_2,\mathcal{H}_1)),\psi_{1} \in  L^\infty (\mathcal{B}(\mathcal{H}_1,\mathcal{H}_2)) \text{ and } \psi_{2} \in L^\infty(\mathcal{B}(\mathcal{H}_2)).$ The natural question occurs: what is the skew commutator of $S_{\mathcal{H}_1}\oplus S_{\mathcal{H}_2}^*$?
       \begin{theorem}\label{skew of S+S*}
           Let $Y\in \mathcal{B}(H^2\oplus H^2).$ Then
           \begin{equation}\label{*}
               (S\oplus S^*)Y=Y(S\oplus S^*)^*
           \end{equation}
           if and only if 
           \begin{equation*}    
                Y=
                \begin{pmatrix}
                0&T_{\phi_{12}}\\
                T^*_{\phi_{21}}&H_\psi
                \end{pmatrix},
            \end{equation*}
             for some $\phi_{12}, \phi_{21} \in H^\infty \text{ and } \psi \in L^\infty.$
            where the block decomposition of $Y$ is with respect to $H^2\oplus H^2.$
            \end{theorem}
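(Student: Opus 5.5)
Write $Y=\begin{pmatrix}Y_{11}&Y_{12}\\ Y_{21}&Y_{22}\end{pmatrix}$ with respect to $H^2\oplus H^2$. Since $(S\oplus S^*)^*=S^*\oplus S$, equation (\ref{*}) splits entrywise into four independent operator equations:
\begin{align*}
SY_{11}=Y_{11}S^*,\qquad SY_{12}=Y_{12}S,\qquad S^*Y_{21}=Y_{21}S^*,\qquad S^*Y_{22}=Y_{22}S.
\end{align*}
The plan is to solve each of these separately, using results already available in the paper. Nothing couples the entries, so once each equation is solved, the converse direction is immediate.

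For the $(1,1)$ entry, $SY_{11}=Y_{11}S^*$ is exactly part (\ref{p-1}) of Theorem \ref{th1}, so $Y_{11}=0$. For the $(1,2)$ entry, $Y_{12}$ commutes with $S$. By the classical commutant theorem (recalled in the introduction), $Y_{12}=T_{\phi_{12}}$ for some $\phi_{12}\in H^\infty$. For the $(2,1)$ entry, take adjoints: $S^*Y_{21}=Y_{21}S^*$ becomes $SY_{21}^*=Y_{21}^*S$. Hence $Y_{21}^*=T_{\phi_{21}}$ with $\phi_{21}\in H^\infty$, that is, $Y_{21}=T^*_{\phi_{21}}$. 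For the $(2,2)$ entry, $S^*Y_{22}=Y_{22}S$ is the Hankel relation, so Nehari's theorem (Theorem \ref{p-4}) gives $Y_{22}=H_\psi$ with $\psi\in L^\infty$.

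For the converse, take $Y$ of the stated form and verify the four identities directly. The $(1,1)$ identity is trivial. The $(1,2)$ identity holds because $ST_{\phi_{12}}=T_{z\phi_{12}}=T_{\phi_{12}}S$ for analytic $\phi_{12}$. The $(2,1)$ identity is the adjoint of the same fact applied to $\phi_{21}$. The $(2,2)$ identity $S^*H_\psi=H_\psi S$ is the defining property of Hankel operators. Multiplying out the block matrices then recovers (\ref{*}).

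No step is a genuine obstacle; the main care needed is bookkeeping. One must compute the block products correctly, so that the diagonal entries of $(S\oplus S^*)Y$ and $Y(S^*\oplus S)$ pair up as above, and note that the off-diagonal equations are intertwining rather than skew relations. Everything else reduces to the commutant theorem, Nehari's theorem, and Theorem \ref{th1}.
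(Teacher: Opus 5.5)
Your proof is correct and essentially the same as the paper's: it splits the equation entrywise into the same four relations, then gets $Y_{11}=0$ from part (1) of Theorem \ref{th1}, $Y_{12}$ and $Y_{21}^*$ from the commutant of $S$, and $Y_{22}=H_\psi$ from Nehari's theorem. Your explicit check of the converse fills in a direction the paper leaves implicit.
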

            \begin{proof}
               Let us write $Y$ in the form of an operator matrix in the decomposition $H^2\oplus H^2,$
           \begin{equation*}
           Y=
               \begin{pmatrix}
                   Y_{11}&Y_{12}\\
                   Y_{21}&Y_{22}
               \end{pmatrix}.
           \end{equation*}
           Now by equation (\ref{*}), we get
           \begin{equation*}
                \begin{pmatrix}
                   SY_{11}&SY_{12}\\
                   S^*Y_{21}&S^*Y_{22}
               \end{pmatrix}
               =
               \begin{pmatrix}
                   Y_{11}S^*&Y_{12}S\\
                   Y_{21}S^*&Y_{22}S
               \end{pmatrix}.
           \end{equation*}
            So, $Y_{11}=0$, there exist  $\phi_{12},~ \phi_{21} \in H^\infty$ and $\psi \in L^\infty $ such that $Y_{12}=T_{\phi_{12}},$ $Y_{21}=T^*_{\phi_{21}}$, and $Y_{22}=H_{\psi}.$ Hence
            \begin{equation*}    
                Y=
                \begin{pmatrix}
                0&T_{\phi_{12}}\\
                T^*_{\phi_{21}}&H_\psi
                \end{pmatrix}.
            \end{equation*}
        \end{proof}
        Using the similar argument of unitary $U,$ we have the following Lemma.
        \begin{lemma}\label{special}
            Let $X \in \mathcal{B}(H^2).$ Then 
            $$XS^2={S^*}^2X$$ if and only if $$X=U^*
                \begin{pmatrix}
                H_{\phi_{11}}&H_{\phi_{12}}\\
                H_{\phi_{21}}&H_{\phi_{22}}
                \end{pmatrix}U,$$ 
                for some $\phi_{11}, \phi_{12}, \phi_{21}, \phi_{22} \in L^{\infty}.$
        \end{lemma}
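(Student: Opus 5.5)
The plan is to conjugate by the unitary $U$ introduced before Theorem \ref{p-3}, so that $S^2$ becomes a direct sum of two copies of $S$, and then to apply Nehari's theorem (Theorem \ref{p-4}) entrywise.

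\emph{Step 1.} I will compute $US^2U^*$. In the proof of Theorem \ref{p-3} we have
$$\tilde S=USU^*=\begin{pmatrix}0&S\\ I&0\end{pmatrix}.$$
Squaring gives
$$US^2U^*=\tilde S^2=\begin{pmatrix}S&0\\0&S\end{pmatrix}=S\oplus S.$$
This can also be checked directly. If $f(z)=f_2(z^2)+zf_1(z^2)$, then
$$z^2f(z)=(zf_2)(z^2)+z\,(zf_1)(z^2),$$
so $U(S^2f)=(Sf_2,Sf_1)$. Taking adjoints and using that $U$ is unitary gives $U{S^*}^2U^*=S^*\oplus S^*$.

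\emph{Step 2.} I set $Y=UXU^*=(Y_{ij})_{i,j=1}^2$ with respect to $H^2\oplus H^2$. Conjugating by $U$ shows that $XS^2={S^*}^2X$ is equivalent to
$$Y(S\oplus S)=(S^*\oplus S^*)Y.$$
Comparing entries of the two block matrices, this is equivalent to the four separate equations
$$Y_{ij}S=S^*Y_{ij},\qquad i,j\in\{1,2\}.$$

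\emph{Step 3.} By Theorem \ref{p-4}, each equation $Y_{ij}S=S^*Y_{ij}$ holds if and only if $Y_{ij}=H_{\phi_{ij}}$ for some $\phi_{ij}\in L^\infty$. Hence $XS^2={S^*}^2X$ holds if and only if $X=U^*YU$ with $Y$ the stated $2\times 2$ block matrix of Hankel operators.

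For the converse direction, I reverse the chain: each $H_{\phi_{ij}}$ satisfies $H_{\phi_{ij}}S=S^*H_{\phi_{ij}}$, so $Y$ intertwines $S\oplus S$ with $S^*\oplus S^*$. Conjugating back by $U$ then gives $XS^2={S^*}^2X$.

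The only step that needs genuine care is the identification $US^2U^*=S\oplus S$ in Step 1. In particular, the ordering of the components $(f_2,f_1)$ must be tracked correctly. Once that is in place, the rest is a formal block computation followed by Nehari's theorem.
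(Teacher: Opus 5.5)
Your proposal is correct and takes the same route the paper indicates. The paper only says the lemma follows ``using the similar argument of unitary $U$''; your argument supplies the details: conjugating by $U$ gives $US^2U^* = \tilde S^2 = S\oplus S$, the equation reduces to the four entrywise equations $Y_{ij}S = S^*Y_{ij}$, and Nehari's theorem (Theorem \ref{p-4}) then identifies each $Y_{ij}$ as $H_{\phi_{ij}}$.
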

        \begin{theorem}\label{skew is S+S*}
           A bounded operator $Y\in \mathcal{B}(H^2\oplus H^2)$ 
           satisfies
           \begin{equation*}
               (S\oplus S^*)Y=Y^*(S\oplus S^*)
           \end{equation*}
           if and only if 
           \begin{equation}\label{converse skew}
               Y=
               \begin{pmatrix}
                   U^*T_{11}U& U^*\hat{T}_{12}U\\
                   U^*{\hat{T}_{21}}U& U^*T_{22}U
               \end{pmatrix}
           \end{equation}
           where 
           $$T_{11}=
             \begin{pmatrix}
                 T^*_{\phi_{11}}&T_{\overline{f}+zf}\\
                 0&T_{\phi_{11}}
             \end{pmatrix},
             T_{22}=
             \begin{pmatrix}
                T^*_{\phi_{22}}&T_{\overline{g}+zg}\\
                 0&T_{\phi_{22}}
             \end{pmatrix},
             $$
                for some $\phi_{11}, \phi_{22} \in H^\infty, f, g \in H^2$ with ${\overline{f}+zf}, {\overline{g}+zg} \in L^\infty,$ and
             $$
                \hat{T}_{12}=
                \begin{pmatrix}
                0&H_{\hat{\phi}_{12}}\\
                0&0
                \end{pmatrix},
                \hat{T}_{21}=
                \begin{pmatrix}
                0&H^*_{\hat{\phi}_{12}}\\
                0&0
                \end{pmatrix},
            $$
             for some ${\hat{\phi}_{12}} \in L^\infty.$
        \end{theorem}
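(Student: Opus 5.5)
The plan is to write $Y$ as a $2\times 2$ operator matrix with respect to $H^2\oplus H^2$,
$$Y=\begin{pmatrix}Y_{11}&Y_{12}\\ Y_{21}&Y_{22}\end{pmatrix},$$
and compare the entries of $(S\oplus S^*)Y$ and $Y^*(S\oplus S^*)$, as in the proof of Theorem \ref{skew of S+S*}. This splits the identity into four operator equations:
$$SY_{11}=Y_{11}^*S,\qquad S^*Y_{22}=Y_{22}^*S^*,\qquad SY_{12}=Y_{21}^*S^*,\qquad S^*Y_{21}=Y_{12}^*S .$$

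\textbf{Diagonal entries.} The diagonal equations decouple.
\begin{enumerate}
\item The first is exactly the equation of Theorem \ref{p-3}, so $Y_{11}=U^*T_{11}U$ with $T_{11}$ of the stated form.
\item The second is the equation in item (2) of the Remark following Theorem \ref{th2}, so $Y_{22}=U^*T_{22}U$.
\end{enumerate}
For the converse direction on these entries, the verification computation at the end of the proof of Theorem \ref{p-3} can simply be reused.

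\textbf{Off-diagonal entries: reduction to Lemma \ref{special}.} Put $Z:=Y_{21}^*$. The third equation reads $SY_{12}=ZS^*$. Taking the adjoint of the fourth gives $S^*Y_{12}=ZS$. From the first of these, $Y_{12}=S^*ZS^*$. Substituting into the second gives $S^{*2}ZS^*=ZS$, and right multiplying by $S$ yields
$$ZS^2={S^*}^2Z .$$
By Lemma \ref{special}, $Z=U^*\begin{pmatrix}H_{\phi_{11}}&H_{\phi_{12}}\\ H_{\phi_{21}}&H_{\phi_{22}}\end{pmatrix}U$. Two pieces of information were lost in this reduction and must be imposed back:
\begin{enumerate}
\item $SY_{12}=ZS^*$ with $Y_{12}=S^*ZS^*$ forces $(I-SS^*)ZS^*=0$. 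Since $S^*$ is onto, this means $(I-SS^*)Z=0$, i.e. $\operatorname{ran} Z\perp 1$.
\item The relation $S^*Y_{12}=ZS$ must be rechecked.
\end{enumerate}

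\textbf{Main obstacle: bookkeeping in the $U$-picture.} Using $USU^*=\begin{pmatrix}0&S\\ I&0\end{pmatrix}$ and $US^*U^*=\begin{pmatrix}0&I\\ S^*&0\end{pmatrix}$, I will translate these constraints into conditions on the four Hankel blocks.
\begin{enumerate}
\item The condition $\operatorname{ran}Z\perp 1$ says the first coordinate of $UZ$ lies in $SH^2$, i.e. $(I-SS^*)H_{\phi_{11}}=(I-SS^*)H_{\phi_{12}}=0$.
\item The Hankel relations $S^*H=HS$ should then propagate this to the vanishing of the whole first column and of $H_{\phi_{22}}$.
\item This should leave $UZU^*=\begin{pmatrix}0&H_{\hat\phi_{12}}\\0&0\end{pmatrix}$, possibly after absorbing a shift into the symbol $\hat\phi_{12}$.
\end{enumerate}
Then $Y_{21}=Z^*$ and $Y_{12}=S^*ZS^*$ are computed by block multiplication, and I expect these to match $U^*\hat T_{21}U$ and $U^*\hat T_{12}U$. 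This step is where I expect most difficulty, namely pinning down exactly which Hankel blocks vanish and whether $S^*ZS^*$ collapses to the displayed $\hat T_{12}$. I would first test it on monomials $z^k$ to fix the indices.

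The converse will be a direct block-matrix check of the two off-diagonal equations for the displayed $\hat T_{12}$ and $\hat T_{21}$. It uses the Hankel identity $S^*H_{\hat\phi_{12}}=H_{\hat\phi_{12}}S$ and its adjoint.
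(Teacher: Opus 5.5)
\textbf{Gap: the surviving Hankel block is misidentified, and the second lost constraint is never worked out.} Your reduction itself is correct. It parallels the paper's proof, which applies Lemma \ref{special} to $Y_{12}$ rather than to $Z=Y_{21}^*$ and then re-imposes both off-diagonal equations through $\tilde S$.

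Write $UZU^*=\begin{pmatrix}H_{\phi_{11}}&H_{\phi_{12}}\\ H_{\phi_{21}}&H_{\phi_{22}}\end{pmatrix}$. Your constraint $(I-SS^*)Z=0$ does give $(I-SS^*)H_{\phi_{11}}=(I-SS^*)H_{\phi_{12}}=0$. A Hankel operator is determined by the numbers $\langle Hz^k,1\rangle$, so this kills $H_{\phi_{11}}$ and $H_{\phi_{12}}$. That is the first \emph{row}, not the first column, and it says nothing about $H_{\phi_{21}}$ or $H_{\phi_{22}}$.

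Your predicted endpoint $UZU^*=\begin{pmatrix}0&H_{\hat\phi_{12}}\\0&0\end{pmatrix}$ is incompatible with that same constraint, since it would force $H_{\hat\phi_{12}}=0$. Even taken formally, it gives $UY_{21}U^*=(UZU^*)^*=\begin{pmatrix}0&0\\ H^*_{\hat\phi_{12}}&0\end{pmatrix}\neq\hat T_{21}$. No shift of the symbol can move a block to a different position.

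The missing ingredient is the condition you only flagged as ``must be rechecked''. Since $ZS^2={S^*}^2Z$ gives ${S^*}^2ZS^*=ZS^2S^*$, the relation $S^*Y_{12}=ZS$ (that is, ${S^*}^2ZS^*=ZS$) is equivalent to $ZS(I-SS^*)=0$, i.e. $Zz=0$. Because $Uz=(0,1)$, this says $H_{\phi_{12}}1=H_{\phi_{22}}1=0$. Then $Hz^n={S^*}^nH1$ gives $H_{\phi_{22}}=0$.

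So the surviving block is the $(2,1)$ one: $UZU^*=\begin{pmatrix}0&0\\ H_{\phi_{21}}&0\end{pmatrix}$. Hence $UY_{21}U^*=(UZU^*)^*=\begin{pmatrix}0&H^*_{\phi_{21}}\\0&0\end{pmatrix}$. Using $US^*U^*=\begin{pmatrix}0&I\\S^*&0\end{pmatrix}$, you get $UY_{12}U^*=(US^*U^*)(UZU^*)(US^*U^*)=\begin{pmatrix}0&H_{\phi_{21}}\\0&0\end{pmatrix}$. This is exactly the statement with $\hat\phi_{12}=\phi_{21}$.

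With this correction the rest of your plan goes through. The diagonal entries follow from Theorem \ref{p-3} and item (2) of the Remark. The converse follows from $S^*H=HS$ and its adjoint.
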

        \begin{proof}
           Let us write $Y$ in the form of an operator matrix in the decomposition $H^2\oplus H^2$.
           \begin{equation*}
            Y=
               \begin{pmatrix}
                   Y_{11}&Y_{12}\\
                   Y_{21}&Y_{22}
               \end{pmatrix}.
           \end{equation*}
           The above operator equation leads to
           \begin{equation*}
                \begin{pmatrix}
                   SY_{11}&SY_{12}\\
                   S^*Y_{21}&S^*Y_{22}
               \end{pmatrix}
               =
               \begin{pmatrix}
                   Y^*_{11}S&Y^*_{21}S^*\\
                   Y^*_{12}S&Y^*_{22}S^*\\
               \end{pmatrix}.
           \end{equation*}
             By Theorem \ref{p-3}, 
             \begin{align*}
                 Y_{11}=U^*T_{11}U, \text{ where } T_{11}=
             \begin{pmatrix}
                 T^*_{\phi_{11}}&T_{\overline{f}+zf}\\
                 0&T_{\phi_{11}}
             \end{pmatrix},
             \end{align*}
             for some $\phi_{11} \in H^\infty, f \in H^2$ with $\overline{f}+zf \in L^\infty.$
             \begin{align*}
                 Y_{22}=U^*T_{22}U, \text{ where } T_{22}=
             \begin{pmatrix}
                 T^*_{\phi_{22}}&T_{\overline{g}+zg}\\
                 0&T_{\phi_{22}}
             \end{pmatrix},
             \end{align*}
             for some $\phi_{22} \in H^\infty, g \in H^2$ with $\overline{g}+zg\in L^\infty.$
             
             It remains to find the solutions to the following operator equations:
             \begin{equation}\label{equation-1}
                 SY_{12}=Y^*_{21}S^*,
             \end{equation}
             and 
             \begin{equation}\label{equation-2}
                 S^*Y_{21}=Y^*_{12}S.
             \end{equation}
            Composing equation (\ref{equation-1}) with $S$ we get
            \begin{equation*}
                SY_{12}S=Y^*_{21}.
            \end{equation*}
            Equivalently
            \begin{equation}\label{equation-3}
                Y_{21}=S^*Y^*_{12}S^*.
            \end{equation}
            Substituting in equation (\ref{equation-3}) in equation \ref{equation-2} we get
            \begin{equation} \label{equation-4}
                {S^*}^2Y^*_{12}S^*=Y^*_{12}S.
            \end{equation}
            Composing equation (\ref{equation-4}) with $S$, we get
            \begin{equation*}
                 Y^*_{12}S^2={S^*}^2Y^*_{12}.
            \end{equation*}
            Equivalently
            \begin{equation*}
                 Y_{12}S^2={S^*}^2Y_{12}.
            \end{equation*}
            By Lemma \ref{special}, we get
            $Y_{12}=U^*\hat{T}_{12}U,$ where
            \begin{equation*}
            \hat{T}_{12}=
                \begin{pmatrix}
                H_{\hat{\phi}_{11}}&H_{\hat{\phi}_{12}}\\
                H_{\hat{\phi}_{21}}&H_{\hat{\phi}_{22}}
                \end{pmatrix}, 
            \end{equation*}
             for some $\hat{\phi}_{11}, \hat{\phi}_{12}, \hat{\phi}_{21}, \hat{\phi}_{22} \in L^{\infty}.$
            Let $\hat T_{21}=UY_{21}U^*.$ Using the same analogy, we get
            \begin{align*}
                \hat T_{21}=UY_{21}U^*&= U{S}^*Y^*_{12}S^*U^*\\
                &=\tilde{S}^*{\hat{T}_{12}}^*\tilde{S}^*\\
                &=\begin{pmatrix}
                0&I\\
                S^*&0
            \end{pmatrix}
            \begin{pmatrix}
                H^*_{\hat{\phi}_{11}}&H^*_{\hat{\phi}_{21}}\\
                H^*_{\hat{\phi}_{12}}&H^*_{\hat{\phi}_{22}}
                \end{pmatrix}
            \begin{pmatrix}
                0&I\\
                S^*&0
            \end{pmatrix}\\
            &=\begin{pmatrix}
                H^*_{\hat{\phi}_{22}}S^*&H^*_{\hat{\phi}_{12}}\\
                S^*H^*_{\hat{\phi}_{21}}S^*&S^*H^*_{\hat{\phi}_{11}}
            \end{pmatrix}.
            \end{align*}
            We note that the equations (\ref{equation-1}) and (\ref{equation-2}) is equivalent to the following equations:
            \begin{align}\label{converted equation 1}
                \tilde S \hat T_{12}=\hat T^*_{21}{\tilde S}^*,
            \end{align}
            and
            \begin{align}\label{converted equation 2}
                 {\tilde S}^* \hat T_{21}=\hat T^*_{12}\tilde S.
            \end{align}
            Note that equation (\ref{converted equation 1}) implies
            \begin{align*}
                \begin{pmatrix}
                0&S\\
                I&0
            \end{pmatrix}
            \begin{pmatrix}
                H_{\hat{\phi}_{11}}&H_{\hat{\phi}_{12}}\\
                H_{\hat{\phi}_{21}}&H_{\hat{\phi}_{22}}
                \end{pmatrix}
                =\begin{pmatrix}
                H^*_{\hat{\phi}_{22}}S^*&H^*_{\hat{\phi}_{12}}\\
                S^*H^*_{\hat{\phi}_{21}}S^*&S^*H_{\hat{\phi}_{11}}
                \end{pmatrix}^*
                \begin{pmatrix}
                    0&I\\
                    S^*&0
                \end{pmatrix}.
            \end{align*}
            This implies
            \begin{align*}
               \begin{pmatrix}
                SH_{\hat{\phi}_{21}}&SH_{\hat{\phi}_{22}}\\
                H_{\hat{\phi}_{11}}&H_{\hat{\phi}_{12}}
                \end{pmatrix}=
                \begin{pmatrix}
                SH_{\hat{\phi}_{21}}SS^*&SH_{\hat{\phi}_{22}}\\
                H_{\hat{\phi}_{11}}SS^*&H_{\hat{\phi}_{12}}
            \end{pmatrix}.
            \end{align*}
            Therefore we have
            \begin{align}\label{E1}
                SH_{\hat{\phi}_{21}}=SH_{\hat{\phi}_{21}}SS^*, ~H_{\hat{\phi}_{11}}=H_{\hat{\phi}_{11}}SS^*.
            \end{align}
            Hence $H_{\hat{\phi}_{11}}=H_{\hat{\phi}_{21}}=0.$
            \begin{align}\label{E2}
                SH_{\hat{\phi}_{22}}=SH^*_{\hat{\phi}_{22}},
            \end{align}
            By equation (\ref{converted equation 2}) we have
            \begin{align*}
                \begin{pmatrix}
                    0&I\\
                    S^*&0
                \end{pmatrix}
                \begin{pmatrix}
                H^*_{\hat{\phi}_{22}}S^*&H^*_{\hat{\phi}_{12}}\\
                0&0
                \end{pmatrix}
                =\begin{pmatrix}
                0&0\\
                 H^*_{\hat{\phi}_{12}} &H^*_{\hat{\phi}_{22}}
                \end{pmatrix}
                \begin{pmatrix}
                    0&S\\
                    I&0
                \end{pmatrix}.
            \end{align*}
            Thus
            \begin{align*}
                \begin{pmatrix}
                    0&0\\
                    S^*H^*_{\hat{\phi}_{22}}S^*&S^*H^*_{\hat{\phi}_{11}}
                \end{pmatrix}=
                \begin{pmatrix}
                    0&0\\
                    H^*_{\hat{\phi}_{22}}&H^*_{\hat{\phi}_{12}}S
                \end{pmatrix}.
            \end{align*}
            Therefore we get $H_{\hat{\phi}_{22}}=0.$
            Thus
            \begin{align*}
                \hat{T}_{12}=
                \begin{pmatrix}
                0&H_{\hat{\phi}_{12}}\\
                0&0
                \end{pmatrix},
                \hat{T}_{21}=
                \begin{pmatrix}
                0&H^*_{\hat{\phi}_{12}}\\
                0&0
                \end{pmatrix},
            \end{align*}
            for some ${\hat{\phi}_{12}} \in L^\infty.$ Conversely let $Y$ be of the form as defined in equation (\ref{converse skew}). The diagonal equations follows from Theorem \ref{p-3}, and the off diagonal parts are immediate by definition of Hankel operator.
       \end{proof}
        \begin{theorem}\label{self adjoint S+S*}
             Let $Y\in \mathcal{B}(H^2\oplus H^2).$ Then
           \begin{equation*}
               (S\oplus S^*)Y=Y^*(S\oplus S^*)^*
           \end{equation*}
           if and only
           \begin{align}\label{self adjoin s+s star equation}
               Y=
               \begin{pmatrix}
              A_{11}S^*& S^*D^*S\\
              Y_{21}& P_{\langle 1\rangle}T+A_{22}S+SB_{22}
          \end{pmatrix},
           \end{align}
          for some bounded operator $Y_{21},$ and self adjoint operators $A_{11},A_{22}$ and $B_{22}$ on $H^2,$ where the operator matrix of $Y_{21}$ in the decomposition $\langle1\rangle \oplus zH^2$ of $H^2$ is given by
            \begin{equation*}
            Y=
                \begin{pmatrix}
                    a&g\\
                    0&D
                \end{pmatrix},
            \end{equation*}
            for some 
                $a \in \mathbb{C}, g\in \mathcal{B}(zH^2,\langle1\rangle)$ and $~ D\in \mathcal{B}(zH^2).$
            \end{theorem}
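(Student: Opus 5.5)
The plan is to write $Y$ as an operator matrix $\begin{pmatrix} Y_{11}&Y_{12}\\ Y_{21}&Y_{22}\end{pmatrix}$ with respect to $H^2\oplus H^2$, exactly as in the proofs of Theorem \ref{skew of S+S*} and Theorem \ref{skew is S+S*}. Since $(S\oplus S^*)^*=S^*\oplus S$, the equation $(S\oplus S^*)Y=Y^*(S\oplus S^*)^*$ becomes
\begin{equation*}
\begin{pmatrix} SY_{11}&SY_{12}\\ S^*Y_{21}&S^*Y_{22}\end{pmatrix}
=\begin{pmatrix} Y_{11}^*S^*&Y_{21}^*S\\ Y_{12}^*S^*&Y_{22}^*S\end{pmatrix}.
\end{equation*}
This gives four scalar-block equations, and I will treat the diagonal and off-diagonal ones separately.

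\textbf{Diagonal entries.} The $(1,1)$ equation is $SY_{11}=Y_{11}^*S^*$. By Theorem \ref{th1}(\ref{p-2}) it holds if and only if $Y_{11}=A_{11}S^*$ with $A_{11}$ self-adjoint. The $(2,2)$ equation is $S^*Y_{22}=Y_{22}^*S$. By part (3) of the Remark following Theorem \ref{th2} it holds if and only if $Y_{22}=P_{\langle 1\rangle}T+A_{22}S+SB_{22}$ with $A_{22},B_{22}$ self-adjoint. So the diagonal entries require no new work.

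\textbf{Off-diagonal entries.} The two equations are $SY_{12}=Y_{21}^*S$ and $S^*Y_{21}=Y_{12}^*S^*$. The second is just the adjoint of the first, so only $SY_{12}=Y_{21}^*S$ needs to be solved.
\begin{itemize}
\item Multiplying on the left by $S^*$ forces $Y_{12}=S^*Y_{21}^*S$, so $Y_{12}$ is determined by $Y_{21}$.
\item Substituting back, the equation becomes $SS^*Y_{21}^*S=Y_{21}^*S$. This is equivalent to $(I-SS^*)Y_{21}^*S=P_{\langle1\rangle}Y_{21}^*S=0$.
\item Taking adjoints, this is $S^*Y_{21}P_{\langle 1\rangle}=0$, i.e. $Y_{21}(1)\in\langle 1\rangle$.
\end{itemize}

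The main (though mild) step is to translate this last condition into the block form. In the decomposition $\langle 1\rangle\oplus zH^2$, the condition says exactly that the $(2,1)$ entry of $Y_{21}$ vanishes, so $Y_{21}=\begin{pmatrix} a&g\\ 0&D\end{pmatrix}$ with $a$, $g$ and $D$ otherwise arbitrary. I will then compute $S^*Y_{21}^*S$ in this decomposition. Since $S$ maps into $zH^2$ and $Y_{21}^*=\begin{pmatrix}\overline a&0\\ g^*&D^*\end{pmatrix}$, only the $zH^2$-component survives after applying $S^*$. This should yield $Y_{12}=S^*D^*S$, where $D^*$ acts on $zH^2$ as the corresponding corner. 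I will need care with identifying $D$ as an operator on $zH^2$ and checking that the $g^*$ term is killed.

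\textbf{Converse.} For $Y$ of the stated form, the diagonal identities follow from Theorem \ref{th1} and the Remark. For the off-diagonal identity, the lower-triangular structure gives $P_{\langle1\rangle}Y_{21}^*S=0$, hence $SY_{12}=SS^*Y_{21}^*S=Y_{21}^*S$; its adjoint gives the remaining equation.
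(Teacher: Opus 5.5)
Your proposal is correct and follows the paper's proof essentially step for step: the same block decomposition, Theorem~\ref{th1}(2) and part (3) of the Remark after Theorem~\ref{th2} for the diagonal entries, and the reduction of the off-diagonal equation to $S^*Y_{21}P_{\langle 1\rangle}=0$ with $Y_{12}=S^*Y_{21}^*S$. Your explicit check that $Y_{21}^*S=D^*S$ (the $g^*$ term vanishes because $S$ maps into $zH^2$) is the computation the paper leaves as ``easy to see'' when it asserts $Y_{12}=S^*D^*S$.
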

        \begin{proof}
            Let us write $Y$ in the form of an operator matrix in the decomposition $H^2\oplus H^2$.
           \begin{equation*}
               \begin{pmatrix}
                   Y_{11}&Y_{12}\\
                   Y_{21}&Y_{22}
               \end{pmatrix}.
           \end{equation*}
           Now by equation (\ref{*}), we get
           \begin{equation*}
                \begin{pmatrix}
                   SY_{11}&SY_{12}\\
                   S^*Y_{21}&S^*Y_{22}
               \end{pmatrix}
               =
               \begin{pmatrix}
                   Y^*_{11}S^*&Y^*_{21}S\\
                   Y^*_{12}S^*&Y^*_{22}S
               \end{pmatrix}.
           \end{equation*}
           From statement (\ref{p-2}) of Theorem \ref{th1}, we get
           \begin{equation*}
                Y_{11}=A_{11}S^*, \text{ for some self adjoint } A_{11}\in \mathcal{B}(H^2).
           \end{equation*}
           Also, 
           \begin{equation*}
               Y_{22}=P_{\langle 1\rangle}T+A_{22}S+SB_{22},
           \end{equation*}
           for some $T,A_{22},B_{22} \in \mathcal{B}(H^2),$ with $A^*_{22}=A_{22}$ and $B^*_{22}=B_{22}.$
           We have,
           \begin{align*}
              S^*Y_{21} =Y^*_{12}S^*
              =S^*Y_{21}SS^*.
            \end{align*}
            This gives $S^*Y_{21}P_{\langle 1\rangle}=0.$
            Equivalently the operator matrix of $Y_{21}$ in the decomposition $\langle1\rangle \oplus zH^2$ of $H^2$ is given by
            \begin{equation*}
                \begin{pmatrix}
                    a&g\\
                    0&D
                \end{pmatrix},\text{ for some } g\in \mathcal{B}(zH^2,\langle1\rangle),~ D\in \mathcal{B}(zH^2).
            \end{equation*}
            Also, it is easy to see that $Y_{12}=S^*D^*S.$ Conversely let $Y$ as defined in equation (\ref{self adjoin s+s star equation}). We note that
            \begin{align*}
            (S \oplus S^*) Y&=
                \begin{pmatrix}
                    S&0\\
                    0&S^*
                \end{pmatrix}
                \begin{pmatrix}
                    A_{11}S^*& S^*D^*S\\
                    Y_{21}& P_{\langle 1\rangle}T+A_{22}S+SB_{22}
                \end{pmatrix}\\
                &=
                \begin{pmatrix}
                    SA_{11}S^*& SS^*D^*S\\
                    S^*Y_{21}& S^*P_{\langle 1\rangle}T+S^*A_{22}S+B_{22}
                \end{pmatrix}\\
                &= \begin{pmatrix}
                    SA_{11}S^*& SY_{12}\\
                    S^*Y_{21}& S^*P_{\langle 1\rangle}T+S^*A_{22}S+B_{22}
                \end{pmatrix}.
            \end{align*}
            By definition of $Y_{21}$, we have 
            \begin{align*}
                S^*Y_{21}=S^*Y_{21}SS^*=S^*DSS^*,
            \end{align*}
            and for $n \geq 0,$
            \begin{align*}
            Y^*_{21}S(z^n)=D^*S(z^n)=SS^*D^*S(z^n).
            \end{align*}
            Therefore we get
            \begin{align*}
                (S \oplus S^*) Y&=
                \begin{pmatrix}
                    SA_{11}S^*& Y^*_{21}S\\
                    SDSS^*& S^*P_{\langle 1\rangle}T+S^*A_{22}S+B_{22}
                \end{pmatrix}\\
                &=Y^*(S\oplus S^*)^*.
            \end{align*}
\end{proof}

\section*{{{Acknowledgment}}}
P. Aroda is supported by NBHM Ph.D. fellowship, File No: 0203/13(18)/2021-R\&D-II/13138, by the Department of Atomic Energy, Government of India.

   \noindent $^{*}$ [ P. Aroda ] Department of Mathematics, Indian Institute of Technology Bombay, Mumbai, 400076, India.\\
		\textit{Email address:}~ priyanka.aroda@iitb.ac.in, rspriyanka0412@gmail.com.
        
	\noindent $^{**}$[ S. Dey ] Department of Mathematics, Indian Institute of Technology Bombay, Mumbai, 400076, India.\\
        \textit{Email address:}~ santanudey@iitb.ac.in, santanudey76@gmail.com
\end{document}